\documentclass[a4paper,12pt]{article}
\usepackage{amsmath}
\usepackage{amssymb}
\usepackage{setspace}
\usepackage{fullpage}
\usepackage{bbm}
\usepackage{color}
\usepackage{amsthm}
\newtheorem{theorem}{Theorem}[section]
\newtheorem{lemma}[theorem]{Lemma}

\newtheorem{corollary}[theorem]{Corollary}

\theoremstyle{definition}

\newtheorem*{definition}{Definition}

\usepackage{pdfsync}
\def\PG{\mathrm{PG}}

\def\A{\mathcal{A}}  
\def\D{\mathcal{D}} \def\E{\mathcal{E}} 
 \def\K{\mathcal{K}}
\def\L{\mathcal{L}}  
\def\O{\mathcal{O}} \def\P{\mathcal{P}}
\def\R{\mathcal{R}} \def\S{\mathcal{S}}
\def\T{\mathcal{T}}

\def\F{\mathbb{F}}

\title{Characterisations of elementary pseudo-caps and good eggs}
\author{ Sara Rottey \and Geertrui Van de Voorde\thanks{This author is supported by the Fund for Scientific Research Flanders
(FWO -- Vlaanderen).}}
%\date{}
\begin{document}
\maketitle
\begin{abstract} In this note, we use the theory of Desarguesian spreads to investigate good eggs. Thas showed that an egg in $\PG(4n-1,q)$, $q$ odd, with two good elements is elementary. By a short combinatorial argument, we show that a similar statement holds for large pseudo-caps, in odd and even characteristic. As a corollary, this improves and extends the result of Thas, Thas and Van Maldeghem (2006) where one needs at least $4$ good elements of an egg in even characteristic to obtain the same conclusion. We rephrase this corollary to obtain a characterisation of the generalised quadrangle $T_3(\O)$ of Tits.

Lavrauw (2005) characterises elementary eggs in odd characteristic as those good eggs containing a space that contains at least $5$ elements of the egg, but not the good element. We provide an adaptation of this characterisation for weak eggs in odd and even characteristic. As a corollary, we obtain a direct geometric proof for the theorem of Lavrauw.
\end{abstract}

\section{Preliminaries}
In this note, we study {\em eggs} and {\em pseudo-caps} in the projective space $\PG(n,q)$, where $\PG(n,q)$ denotes the $n$-dimensional projective space over the finite field $\F_q$ with $q$ elements, $q=p^h$, $p$ prime. Many previous proofs and characterisations of eggs rely on the connection with eggs and {\em translation generalised quadrangles} \cite{TGQ}. It is our aim to study eggs from a purely geometric perspective, without using this connection or coordinates. In Section \ref{2goodDes} we obtain a connection between good eggs and Desarguesian spreads. This link will enable us to reprove, improve or extend known results in Sections \ref{3twogood} and \ref{4ThmLavrauw}. We begin by repeating some well-known definitions.

\begin{definition}
A {\em cap} in $\PG(n, q)$ is a set of points such that every three points span a plane. A cap of size $k$ is denoted as a {\em $k$-cap}.
\end{definition}

A $k$-cap of $\PG(2,q)$ is often called a {\em $k$-arc}. A $k$-arc in $\PG(2,q)$ satisfies $k\leq q+1$ for $q$ odd and $k\leq q+2$ for $q$ even. A $(q+1)$-arc is called an {\em oval}, a $(q+2)$-arc a {\em hyperoval}.
A $k$-cap of $\PG(3,q)$, $q>2$ satisfies $k\leq q^2+1$, moreover,  a $(q^2+1)$-cap of $\PG(3,q)$ is often called an {\em ovoid}. We will consider the higher dimensional equivalent of these structures.

\begin{definition}
A {\em pseudo-cap} is a set $\mathcal{C}$ of $(n-1)$-spaces in $\PG(2n+m-1,q)$ such that any three elements of $\mathcal{C}$ span a $(3n-1)$-space.
\end{definition}

If $m=n$, a pseudo-cap is often called a {\em pseudo-arc}. By \cite{Thas PHO}, a pseudo-arc $\A$ in $\PG(3n-1,q)$ satisfies $|\A| \leq q^n+1$ for $q$ odd and $|\A| \leq q^n+2$ for $q$ even. If a pseudo-arc $\A$ has $q^n+1$ or $q^n+2$ elements, $\A$ is a {\em pseudo-oval} or {\em pseudo-hyperoval} respectively. If $m=2n$, a pseudo-cap with $q^{2n}+1$ elements is called a {\em pseudo-ovoid}.

Examples of pseudo-caps in $\PG(kn-1,q)$ arise by applying field reduction to caps in $\PG(k-1,q^n)$ and if a pseudo-cap is obtained by field reduction, we call it {\em elementary}. {\em Field reduction} is the concept where a point in $\PG(k-1,q^n)$ corresponds in a natural way to an $(n-1)$-space of $\PG(kn-1,q)$. The set of all points of $\PG(k-1,q^n)$ then correspond to a set of disjoint $(n-1)$-spaces partitioning $\PG(kn-1,q)$, forming a {\em Desarguesian spread}.
Every Desarguesian spread $\D$ has the property that the space spanned by $2$ elements of $\D$ is partitioned by elements of $\D$, i.e. $\D$ is {\em normal}. Moreover, a normal $(n-1)$-spread of $\PG(kn-1,q)$, $k>2$, is Desarguesian, see \cite{BarlottiCofman}.
For more information on field reduction and Desarguesian spreads we refer to \cite{FQ11}.

A {\em partial spread} in $\PG(n+m-1,q)$ is a set of mutually disjoint $(n-1)$-spaces.
Every element $E_i$ of a pseudo-cap $\E$ of $\PG(2n+m-1,q)$ defines a partial spread $$\S_{i}:=\{ E_0,\ldots,E_{i-1},E_{i+1},\ldots,E_{|\E|}\}/E_i$$ in $\PG(n+m-1,q)\cong \PG(2n+m-1,q)/E_i$ and we say that the element $E_i$ {\em induces} the partial spread $\S_i$.

A partial spread of $\PG(2n-1,q)$ of size $q^n$ is said to have {\em deficiency 1}.
From \cite{Beutelspacher}, we know that a partial spread of $\PG(2n-1,q)$ with deficiency 1 can be extended to a spread in a unique way, i.e. the set of points in $\PG(2n-1,q)$ not lying on an element of the partial spread, form an $(n-1)$-space.

\begin{definition}
A {\em weak egg} in $\PG(2n+m-1,q)$ is a pseudo-cap of size $q^{m}+1$.
\end{definition}
Clearly, pseudo-ovals and pseudo-ovoids are examples of weak eggs. A weak egg $\E$ in $\PG(2n+m-1,q)$ is called an {\em egg} if each element $E \in \E$ is contained in a $(n+m-1)$-space, $T_E$, which is skew from every element of $\E$ different from $E$. The space $T_E$ is called the {\em tangent space} of $\E$ at $E$. It is not hard to show that if $n=m$, then every weak egg is an egg. Eggs are studied mostly because of their one-to-one correspondance with {\em translation generalised quadrangles} of order $(q^n,q^{2n})$, see Subsection \ref{3.2 TGQ}.

The only known examples of eggs in $\PG(2n+m-1,q)$ have either $m=n$ or $m=2n$ and we have the following theorem restricting the number of possibilities for the parameters $n$ and $m$.

\begin{theorem}{\rm \cite[Theorem 8.7.2]{FGQ}}\label{mogelijkheden}
If $\E$ is an egg of $\PG(2n+m-1,q)$ then $m=n$ or $ma=n(a+1)$ with $a$ odd. Moreover, if $q$ even, then $m=n$ or $m=2n$.
\end{theorem}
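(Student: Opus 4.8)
The plan is to route the argument through the correspondence between eggs and translation generalised quadrangles. First I would attach to $\E$ its translation generalised quadrangle $T(\E)$, which has order $(s,t)=(q^n,q^m)$; here the inequality $m\ge n$ is already forced at the level of the pseudo-cap, since three elements must span a $(3n-1)$-space inside $\PG(2n+m-1,q)$ and hence $2n+m-1\ge 3n-1$. Thus it suffices to translate the desired conclusion into a statement about the admissible parameters $(s,t)$ of this generalised quadrangle.

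Next I would invoke the standard feasibility conditions for a generalised quadrangle of order $(s,t)$ with $s,t>1$. Higman's inequality $t\le s^2$ gives $q^m\le q^{2n}$, that is $m\le 2n$, so that $d:=m-n$ satisfies $0\le d\le n$. The integrality of the eigenvalue multiplicities of the collinearity graph yields the divisibility $s+t\mid st(s+1)(t+1)$. Writing $s=q^n$, $t=q^m$ and using that $s+t=q^n(1+q^{d})$ with $1+q^{d}$ coprime to $q$, this reduces to $(1+q^{d})\mid(q^n+1)(q^m+1)$; reducing modulo $1+q^{d}$ (where $q^{d}\equiv-1$) collapses the right-hand side to $-(q^{2n}-1)$, so the condition becomes $(1+q^{d})\mid(q^{2n}-1)$. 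Since the multiplicative order of $q$ modulo $1+q^{d}$ is exactly $2d$, this holds precisely when $d\mid n$. Setting $a:=n/d$ then gives $ma=n(a+1)$, with the case $m=n$ recovered as the degenerate value $d=0$.

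The genuinely hard part is the parity claim, together with the even-characteristic sharpening. Note first that the divisibility above does not see the parity of $a$: for instance the feasible order $(q^2,q^3)$ (i.e.\ $a=2$) passes every classical generalised-quadrangle test, yet is excluded by the theorem. Hence the extra input must be geometric, coming from the egg itself rather than from its collinearity graph. The route I would pursue is to exploit the partial spread $\S_E$ induced by an element $E$: it consists of $q^m$ mutually skew $(n-1)$-spaces of $\PG(n+m-1,q)$ whose uncovered points form exactly the $(m-1)$-dimensional projection of the tangent space $T_E$. I would try to extract a finer, characteristic-sensitive congruence from the interaction of two such induced structures (equivalently, from a second integrality condition on the associated substructures), forcing $n/d$ to be odd when $q$ is odd, and forcing $d\in\{0,n\}$, i.e.\ $m\in\{n,2n\}$, when $q$ is even, where a characteristic-two phenomenon should eliminate every remaining value $a\ge 3$. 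I expect this last step to be the crux: the elementary generalised-quadrangle arithmetic delivers $ma=n(a+1)$ almost for free, whereas the parity and the clean even-characteristic dichotomy require genuinely new input beyond the standard feasibility conditions.
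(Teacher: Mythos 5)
Your attempt cannot be measured against anything in the paper itself: the paper does not prove this statement, it quotes it from Payne and Thas \cite[Theorem 8.7.2]{FGQ}. Judged on its own merits, the first two paragraphs of your proposal are correct and cleanly executed. Passing to $T(\E)$ of order $(s,t)=(q^n,q^m)$, Higman's inequality gives $m\le 2n$; the multiplicity condition $s+t\mid st(s+1)(t+1)$ reduces, via $q^{d}\equiv -1 \pmod{1+q^{d}}$ and the fact that $q$ has order exactly $2d$ modulo $1+q^{d}$ (which holds because the order divides $2d$ and exceeds $d$), to the divisibility $d\mid n$ where $d=m-n$. This yields $m=n$ or $ma=n(a+1)$ for \emph{some} positive integer $a$.

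That, however, is strictly weaker than the theorem, and the remaining distance is exactly where its content lies: the claim that $a$ is \emph{odd}, and that $m\in\{n,2n\}$ when $q$ is even. Your argument does not exclude, for instance, an egg in $\PG(6,q)$ (that is, $n=2$, $m=3$, $a=2$), whereas the theorem does. As you yourself observe, no argument at the level of generalised-quadrangle feasibility conditions can ever close this gap, since $H(4,q^2)$ is a genuine generalised quadrangle of order $(q^2,q^3)$, so every condition on the collinearity graph is satisfied by that parameter pair; the obstruction must come from the translation/egg structure. Your third paragraph, which is supposed to supply this input, is a research plan rather than a proof: ``I would try to extract a finer, characteristic-sensitive congruence \ldots forcing $n/d$ to be odd'' names the desired conclusion but provides no mechanism for reaching it, and there is no evidence the particular route sketched (comparing two induced partial spreads) can be completed. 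In the cited source these parts are obtained from genuinely egg- and TGQ-specific machinery (the group-coset description via four-gonal families, the kernel, and properties of the tangent spaces), i.e.\ precisely the ingredient your sketch defers. So the proposal establishes only the easy divisibility shell of the statement and leaves its actual substance unproven.
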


This explains why the study of eggs is mainly focused on pseudo-ovals and pseudo-ovoids.

In the case of pseudo-ovals, all known examples are elementary. The classical example of an oval in $\PG(2,q^n)$ is a conic. It is a well-known theorem of Segre that an oval of $\PG(2,q^n)$, $q$ odd, is always a conic.
A {\em pseudo-conic} in $\PG(3n-1,q)$ is an elementary pseudo-oval, arising from applying field reduction to a conic in $\PG(2,q^n)$.
We have the following theorems characterising elementary pseudo-ovals using the induced Desarguesian spreads.

\begin{theorem}{\rm \cite{Casse}}\label{Casse}
If $\O$ is a pseudo-oval in $\PG(3n-1,q)$, $q$ odd, such that for at least one element the induced spread is Desarguesian, then $\O$ is a pseudo-conic.
\end{theorem}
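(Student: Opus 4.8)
The statement packages two assertions: that $\O$ is \emph{elementary}, and that, $q$ being odd, the associated oval is in fact a conic. The second follows for free once the first is in hand, so the plan is to spend all the effort on elementariness. Indeed, an elementary pseudo-oval of $\PG(3n-1,q)$ is the field-reduction image of a $(q^n+1)$-arc, i.e.\ an oval $\C$, of $\PG(2,q^n)$; since $q^n$ is odd, Segre's theorem forces $\C$ to be a conic, and then $\O$ is a pseudo-conic by definition. Thus it suffices to produce a Desarguesian spread $\D$ of $\PG(3n-1,q)$ with $\O\subseteq\D$: the pseudo-oval axiom (any three elements span) guarantees that the $q^n+1$ points of $\PG(2,q^n)$ corresponding to $\O$ under this $\F_{q^n}$-structure are no three collinear, hence form an oval.

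First I would pass to the quotient at the good element. Let $E_0$ be the element whose induced spread is Desarguesian. The remaining $q^n$ elements induce a partial spread $\S_0$ of deficiency $1$ in $\PG(3n-1,q)/E_0\cong\PG(2n-1,q)$, which by \cite{Beutelspacher} is completed in a unique way by its hole-space; this hole-space is precisely $T_{E_0}/E_0$, and by hypothesis the completed spread $\D_0$ is Desarguesian. Hence $\D_0$ endows $\PG(2n-1,q)$ with the structure of the projective line $\PG(1,q^n)$, its members being the $q^n+1$ points of that line.

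The core of the argument is to lift this $\F_{q^n}$-structure from the quotient to the whole space. Fix two further elements $E_1,E_2$; as any three elements span, the underlying vector space splits as $V=E_0\oplus E_1\oplus E_2$, and, modulo $E_0$, the Desarguesian spread $\D_0$ makes $E_1\oplus E_2\cong V/E_0$ into $\F_{q^n}^2$ with every projection $\overline{E_i}$ an $\F_{q^n}$-subspace. Each $E_i$ is then the graph over $\overline{E_i}$ of an $\F_q$-linear map into $E_0$, and the goal is to equip $V$ with an $\F_{q^n}$-structure extending the one on $V/E_0$ and making $E_0$ and every $E_i$ an $\F_{q^n}$-line. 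Once such a structure is found, one checks that the set $\D$ of all its $\F_{q^n}$-points is a normal $(n-1)$-spread; by \cite{BarlottiCofman} a normal spread of $\PG(3n-1,q)$ is Desarguesian, and $\O\subseteq\D$, completing the reduction above.

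The hard part is exactly this lifting step: a Desarguesian spread in the quotient $\PG(3n-1,q)/E_0$ does not on its own determine an $\F_{q^n}$-structure upstairs, so one must prove that the maps describing how the $E_i$ sit above their projections are forced to be $\F_{q^n}$-semilinear. This is where the remaining pseudo-oval conditions --- that every span $\langle E_i,E_j\rangle$ avoids $E_0$ and induces further (partial) spreads --- must be combined with the hypothesis of odd characteristic, through Segre's theorem applied in the quotient plane $\PG(2,q^n)$, to eliminate the non-linear configurations. I expect this compatibility analysis, rather than the quotient construction, to be the genuine obstacle.
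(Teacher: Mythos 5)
Note first that the paper does not prove this statement at all: it is quoted as an external result from \cite{Casse} (Casse--Thas--Wild), so your attempt has to be judged as a self-contained proof, and it is not one. The parts you do carry out are correct but are the cheap parts: if $\O$ lies in a Desarguesian spread $\D$ of $\PG(3n-1,q)$, then viewing $\D$ as $\PG(2,q^n)$ the elements of $\O$ are indeed no three collinear (three collinear elements would span only a $(2n-1)$-space), Segre's theorem for odd $q^n$ then gives a conic; and the deficiency-one completion of the induced partial spread via \cite{Beutelspacher} is fine. But you then explicitly leave the ``lifting step'' --- producing an $\F_{q^n}$-structure on the whole space, compatible with $E_0$ and with every other element of $\O$, from the hypothesis that the quotient at $E_0$ carries a Desarguesian spread --- as ``the genuine obstacle'' that you ``expect'' must be overcome. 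That step is not a technical verification one may defer: it is the entire mathematical content of the theorem. What you have written is a reduction of the statement to itself, not a proof.

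Worse, the one concrete mechanism you suggest for closing that gap cannot work as stated. You propose to eliminate the non-semilinear configurations ``through Segre's theorem applied in the quotient plane $\PG(2,q^n)$'', but the quotient $\PG(3n-1,q)/E_0$ is $\PG(2n-1,q)$, and the Desarguesian spread $\D_0$ turns it into a projective \emph{line} $\PG(1,q^n)$, not a plane; there is no quotient plane in which Segre's theorem could be invoked. Manufacturing a plane of order $q^n$ in which the pseudo-oval induces an arc is precisely the real work: the known arguments (in \cite{Casse}, and in the generalization \cite{Penttila}) do this by passing to a suitable representation --- for instance a Bruck--Bose/Andr\'e-type representation of $\PG(2,q^n)$ attached to $\D_0$, obtained by quotienting by an $(n-2)$-subspace of $E_0$ rather than by $E_0$ itself, or via the Laguerre-plane/generalised-quadrangle structure associated with $\O$ --- and then proving, using the tangent spaces and the oddness of $q$, that the projected point set is an oval there before Segre can be applied. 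None of this appears in your outline, so the proposal has a genuine, and in fact central, gap.
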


\begin{theorem}{\rm \cite{PsO}}
If $\O$ is a pseudo-oval in $\PG(3n-1,q)$, $n$ prime, $q>2$ even, such that all elements induce a Desarguesian spread, then $\O$ is elementary.
\end{theorem}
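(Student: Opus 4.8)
The plan is to show that the hypothesis forces all $q^n+1$ elements of $\O$ to lie in a single Desarguesian spread $\D$ of $\PG(3n-1,q)$. This suffices: if $\D$ is such a spread, then $\D$ realises field reduction from a plane $\PG(2,q^n)$, and since two spread elements of $\D$ span the $(2n-1)$-space corresponding to the line joining the two associated points, three elements of $\O$ span $\PG(3n-1,q)$ \emph{if and only if} the three corresponding points of $\PG(2,q^n)$ are non-collinear. Hence the pseudo-oval condition translates exactly into the condition that the $q^n+1$ associated points form a $(q^n+1)$-arc, i.e. an oval, so that $\O$ is the field reduction of an oval and therefore elementary. The whole problem is thus the construction of a global spread $\D\supseteq\O$.

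To construct $\D$, fix two elements $E_0,E_\infty\in\O$ and set $\Sigma:=\langle E_0,E_\infty\rangle$, a $(2n-1)$-space. In the elementary model $\Sigma$ carries a regular spread (the field reduction of the line $E_0E_\infty$), and $\D$ is the union, over all pairs, of these regular spreads; so I would first reconstruct the spread that $\Sigma$ must carry directly from the data. For a point $x\in\Sigma\setminus(E_0\cup E_\infty)$ and a third element $E\in\O$ skew to $\Sigma$, the image $\langle E,x\rangle/E$ lies in a unique element $M$ of the induced Desarguesian spread $\S_E$; then $R_x:=\langle E,M\rangle\cap\Sigma$ is an $(n-1)$-space through $x$ contained in $\Sigma$ (a dimension count gives $\dim(\langle E,M\rangle\cap\Sigma)=n-1$ because $E$ is skew to $\Sigma$). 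Letting $x$ range over $\Sigma$ and the pair range over $\O$ produces a candidate family $\D$ containing $\O$.

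The crucial steps, and the main obstacle, are then (i) \emph{well-definedness}: $R_x$ must be independent of the auxiliary element $E$; and (ii) \emph{normality}: the resulting family must be a normal $(n-1)$-spread of $\PG(3n-1,q)$, for then, since here $k=3>2$, the Barlotti--Cofman result quoted above \cite{BarlottiCofman} makes it automatically Desarguesian. Step (i) is exactly where the hypothesis that \emph{every} element induces a Desarguesian spread is used, and where I expect the real work to lie: two auxiliary elements endow the relevant quotients with two $\F_{q^n}$-structures, and one must prove these agree. Here the assumption that $n$ is prime should be decisive, since then the only field between $\F_q$ and $\F_{q^n}$ is trivial, so any partial agreement of the two structures on an $\F_q$-subspace forming a sub-line over an intermediate field must already be total; this rules out the mixed configurations that a composite $n$ would permit.

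Finally, the assumptions $q$ even and $q>2$ enter to close the argument in the regime where the odd-characteristic rigidity of Theorem~\ref{Casse} is unavailable: in odd characteristic a single Desarguesian induced spread already forces a pseudo-conic, whereas in even characteristic ovals need not be conics, so one genuinely needs the Desarguesian hypothesis at every element to force all the local regular spreads to fit together consistently. Once normality of the glued family is established, the construction delivers the Desarguesian spread $\D\supseteq\O$, and the reduction of the first paragraph completes the proof.
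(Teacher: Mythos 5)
First, a point of comparison: this paper does not prove the statement at all --- it is quoted from \cite{PsO}, and the proof there proceeds along entirely different lines (via the correspondence between such pseudo-ovals and Laguerre planes of even order and a characterisation of ovoidal Laguerre planes, as the title of \cite{PsO} indicates), not by gluing the induced spreads into one global spread.

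Your opening reduction is correct: if $\O$ lies in a Desarguesian spread $\D$ of $\PG(3n-1,q)$, then $\O$ is elementary, so it suffices to build $\D$. The genuine gap is that everything after this reduction is a plan rather than a proof: your steps (i) and (ii), which you yourself flag as ``the main obstacle'' and ``where the real work lies,'' are never carried out, and together they are essentially a restatement of the theorem. Concretely, for step (i) you must show that for two auxiliary elements $E,E'\in\O$ (both skew to $\Sigma=\langle E_0,E_\infty\rangle$), the two Desarguesian spreads $\D_E$ and $\D_{E'}$ obtained on $\Sigma$ by pulling back $\S_E$ and $\S_{E'}$ coincide. A priori these spreads are only known to share the two elements $E_0$ and $E_\infty$. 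The one available tool for comparing Desarguesian spreads, Lemma \ref{regpluseen} (which is Corollary 1.8 of \cite{PsO} itself), needs at least three common elements before it says anything, and for $n$ prime it forces equality only once the spreads share strictly more than $q+1$ elements. Your appeal to the primality of $n$ (``no intermediate fields, so partial agreement forces total agreement'') presupposes exactly such a large common intersection, and you give no mechanism for producing even a third common element: each further $F\in\O$ yields $\langle E,F\rangle\cap\Sigma\in\D_E$ and $\langle E',F\rangle\cap\Sigma\in\D_{E'}$, but these two $(n-1)$-spaces have no reason to coincide --- their coincidence for all $F$ is precisely the well-definedness being claimed. Contrast this with Theorem \ref{main} of the present paper, set in $\PG(4n-1,q)$: there the extra dimensions supply, via Lemma \ref{disjoint} and a pigeonhole count over the $(3n-1)$-spaces through $\langle E_1,E_2\rangle$, at least $q^k+2$ elements genuinely common to the two spreads $\S_1$ and $\S_2$ inside $\langle E_3,E_4\rangle$. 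No analogous supply of common elements exists in $\PG(3n-1,q)$, which is exactly why the pseudo-oval case is harder and why \cite{PsO} resorts to Laguerre-plane machinery instead of a direct gluing argument. Step (ii) (normality of the glued family as $E_0,E_\infty$ vary) is likewise asserted, not proved; so the proposal, while a sensible research plan with a correct reduction, contains no proof of either of its two decisive claims.
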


In the case that $q$ is odd, we have the following theorem which extends Theorem \ref{Casse} from pseudo-ovals to large pseudo-arcs in $\PG(3n-1,q)$.

\begin{theorem}{\rm \cite{Penttila}}
If $\K=\{K_1,\ldots,K_s\}$ is a pseudo-arc in $\PG(3n-1,q)$, $q$ odd, of size greater than the size of the second largest complete arc in $\PG(2,q^n)$,  where for one element $K_i$ of $\K$, the partial spread $\S=\{K_1,\ldots,K_{i-1},K_{i+1},\ldots,K_{s}\}/K_i$ extends to a Desarguesian spread of $\PG(2n-1,q)=\PG(3n-1,q)/K_i$, then $\K$ is contained in a pseudo-conic.
\end{theorem}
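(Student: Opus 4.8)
The plan is to transport the whole configuration into $\PG(2,q^n)$ by field reduction and reduce the statement to the theory of arcs in $\PG(2,q^n)$. Write $K:=K_i$ for the distinguished element and let $\D'$ be the Desarguesian spread of $\PG(2n-1,q)\cong\PG(3n-1,q)/K$ extending $\S$. Since $\D'$ is Desarguesian, it is the quotient $\D/K$ of some Desarguesian spread $\D$ of $\PG(3n-1,q)$ with $K\in\D$; fixing such a $\D$ identifies $\PG(3n-1,q)$ with $\PG(2,q^n)$ so that $K$ becomes a point $P$, the elements of $\D'$ become the $q^n+1$ lines through $P$, and for every $j\ne i$ the $(2n-1)$-space $\langle K,K_j\rangle$ becomes a line $\ell_j$ through $P$. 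These lines are pairwise distinct, since any three elements of $\K$ span $\PG(3n-1,q)$ and so no two of the spaces $\langle K,K_j\rangle$ coincide. The target is a pseudo-conic, so the whole problem is to recognise $\K$ as the field reduction of a conic in $\PG(2,q^n)$.

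The heart of the argument is to show that $\K$ is \emph{elementary} with respect to some Desarguesian spread, i.e.\ that every $K_j$ is itself a spread element (a point $Q_j$), so that $\{Q_j\}$ is a set of $s$ points of $\PG(2,q^n)$. Note that the hypothesis ``$K$ induces a Desarguesian spread'' is exactly the necessary shadow of elementariness in the quotient: if $\K$ were elementary, then $K$ would automatically induce $\D/K$, which is Desarguesian. The content of the theorem is to upgrade this necessary condition to genuine elementariness, and this is where I expect the \textbf{main obstacle} to lie. The difficulty is that a priori $K_j$ is only \emph{some} $(n-1)$-space of the $(2n-1)$-space $\ell_j$ disjoint from $K$, and need not belong to the regulus of spread elements covering $\ell_j$. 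I would rigidify the choice of $\D$ by prescribing a frame, requiring four elements $K_1,\dots,K_4$ (in general position, which the pseudo-arc axiom guarantees) to be spread elements, and then show that each remaining $K_j$ is forced to be a spread element as well. That last implication is precisely the mechanism behind Casse's theorem (Theorem~\ref{Casse}): the datum recording the position of each $K_j$ on $\ell_j$ relative to the induced Desarguesian spread is additive, and additivity over a field of odd order produces a conic. This is the step for which the oddness of $q$ is indispensable.

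Granting that $\K$ is elementary, the remainder is pure arc theory in $\PG(2,q^n)$. The points $Q_j$ form a $k$-arc $A$ with $k=s=|\K|$, and by hypothesis $k$ exceeds the size of the second largest complete arc of $\PG(2,q^n)$. Extend $A$ to a complete arc $A'$. Then $|A'|\ge k$ is larger than the second largest complete arc, so $A'$ can only be the largest complete arc, of size $q^n+1$; since $q^n$ is odd, Segre's theorem forces $A'$ to be a conic $\C$. Hence $A\subseteq\C$, and applying field reduction with respect to $\D$ sends $\C$ to a pseudo-conic containing the field reduction of $A$, namely $\K$. This is exactly the assertion that $\K$ is contained in a pseudo-conic.

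A remark on strategy. One might instead try to extend $\K$ directly to a pseudo-oval $\O$ on which $K$ still induces a Desarguesian spread, and then invoke Theorem~\ref{Casse} to conclude that $\O$, and hence $\K$, lies in a pseudo-conic. However, constructing such an $\O$ already requires knowing where to place the missing $(n-1)$-spaces inside the spaces $\langle K,R\rangle$ for $R\in\D'\setminus\S$, which is equivalent to the rigidity-and-additivity step above. For this reason I would establish elementariness first and retain the arc-theoretic completion, together with the second-largest-arc bound and Segre's theorem, as the clean final step.
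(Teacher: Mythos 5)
You have put your finger on the right difficulty, but the proposal does not overcome it, and the phase structure you impose cannot work. You establish elementariness of $\K$ in a first phase that uses only the pseudo-arc axiom, the Desarguesian quotient at $K$, and the oddness of $q$, postponing the size hypothesis to a ``pure arc theory'' second phase. That first phase is not merely unproven (an appeal to ``the mechanism behind Casse's theorem'' and an unspecified additivity is not an argument, and Theorem~\ref{Casse} as available applies to pseudo-ovals, not to pseudo-arcs); its conclusion is \emph{false} under exactly the hypotheses you allow it. Take $n=2$, write points of $\PG(5,q)$ as $\F_q(x,y,z)$ with $x,y,z\in\F_{q^2}$, let $M_c$ denote multiplication by $c\in\F_{q^2}$, choose primitive elements $a,b$ of $\F_{q^2}$ with $b\notin\{a,a^q\}$, and choose $\theta\in\GL(2,q)$ outside the normaliser $N=\F_{q^2}^*\rtimes\mathrm{Gal}(\F_{q^2}/\F_q)$ of the scalar field. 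Then the five lines $K=\F_{q^2}(1,0,0)$, $\F_{q^2}(0,1,0)$, $\F_{q^2}(0,0,1)$, $\F_{q^2}(1,1,1)$ and $L=\{\F_q(M_b(t),\theta M_a\theta^{-1}(t),t):t\in\F_{q^2}\}$ form a pseudo-arc, and the partial spread induced by $K$ extends to the Desarguesian spread consisting of the graphs of the maps $\theta M_c\theta^{-1}$, $c\in\F_{q^2}$, together with the element at infinity. Yet no Desarguesian spread contains all five: any Desarguesian spread through the first four has its elements disjoint from $\langle\F_{q^2}(1,0,0),\F_{q^2}(0,1,0)\rangle$ of the form $\{\F_q(\psi M_u\psi^{-1}(t),\psi M_v\psi^{-1}(t),t)\}$ for a fixed $\psi\in\GL(2,q)$, and $\psi M_u\psi^{-1}=M_b$ with $b$ primitive forces $\psi\in N$, which is incompatible with $\theta M_a\theta^{-1}$ being non-scalar. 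So elementariness genuinely depends on $|\K|$; no rigidity-and-additivity argument that ignores the size bound can exist. The same example shows that your normalisation step is illegitimate: there is in general no Desarguesian spread $\D$ with $K\in\D$, $\D/K=\D'$ containing four further prescribed elements of $\K$ --- its existence is essentially the statement to be proven --- and even if your four elements are meant to include $K$, the subsequent ``each remaining $K_j$ is forced'' step fails on the line $L$ above. Compare Lemma~\ref{UniqueDesSpread}: under the stronger requirement that $\D$ contain every element of $\D'$ as a subspace, prescribing $K$ and a single further element already pins down $\D$ uniquely, so there is no room left to prescribe a frame.

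Note also that the paper does not prove this theorem at all; it is quoted from \cite{Penttila}, and the route taken there is essentially the one you dismiss in your closing remark. The size hypothesis is the engine, not the clean-up: it is used, via the theory of arcs in $\PG(2,q^n)$, to \emph{extend} the pseudo-arc to a pseudo-oval for which the distinguished element still induces a Desarguesian spread, and only then does Theorem~\ref{Casse} (Casse--Thas--Wild, which genuinely needs all $q^n+1$ elements) yield the pseudo-conic. Your own observation that placing the missing $(n-1)$-spaces is ``equivalent to the rigidity-and-additivity step'' is correct, but the resolution is the opposite of what you propose: the arc bound supplies the placement, so it must be woven into the extension argument and cannot be quarantined to a final step performed after elementariness is already known.
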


In Theorem \ref{main}, we will prove a similar statement for pseudo-caps in $\PG(4n-1,q)$.

All known examples of pseudo-ovoids in $\PG(4n-1,q)$ are elementary when $q$ is even, but in contrast to the situation for pseudo-ovals, when $q$ is odd, there are non-elementary examples of pseudo-ovoids. The standard example of an ovoid in $\PG(3,q^n)$ is an elliptic quadric $Q^{-}(3,q^n)$. By the famous result of Barlotti and Panella \cite{Barlotti,Panella}, every ovoid of $\PG(3,q^n)$, $q$ odd, is an elliptic quadric $Q^{-}(3,q^n)$, however, there is no classification of ovoids in $\PG(3,q^n)$ for $q$ even.
  For both even and odd order $q$, the classification of pseudo-ovoids is an open problem.

\section{Good eggs and Desarguesian spreads}\label{2goodDes}

A (weak) egg $\E$ in $\PG(2n+m-1,q)$, $m>n$, is {\em good} at an element $E \in \E$ if every $(3n-1)$-space containing $E$ and at least two other elements of $\E$, contains exactly $q^n+1$ elements of $\E$. A (weak) egg that has at least one good element is called a {\em good (weak) egg}.
If $\E$ is good at $E$, then for any two elements $E_1,E_2 \in \E \backslash\{E\}$ the $(3n-1)$-space $\langle E, E_1, E_2\rangle$ intersects $\E$ in a pseudo-oval.

\begin{lemma} \label{m=2n}
Good weak eggs in $\PG(2n+m-1,q)$ can only exist if $n$ is a divisor of $m$.
Good eggs only exist in $\PG(4n-1,q)$.
\end{lemma}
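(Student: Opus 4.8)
The plan is to pass to the quotient geometry by projecting from a good element and to reinterpret goodness as a combinatorial design on the induced partial spread. Let $E$ be a good element and write $\S_E$ for the partial spread it induces in $\PG(n+m-1,q)\cong\PG(2n+m-1,q)/E$. Since $\E$ is a pseudo-cap of size $q^m+1$, any two of its elements are disjoint (three of them span a $(3n-1)$-space, which forces pairwise disjointness by a dimension count), so $\S_E$ consists of $q^m$ mutually disjoint $(n-1)$-spaces, obtained by projecting the elements $E_i\in\E\setminus\{E\}$ from $E$; denote the projection of $E_i$ by $\overline{E_i}$.

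First I would translate the goodness condition. Given $E_1,E_2\in\E\setminus\{E\}$, the space $\langle E,E_1,E_2\rangle$ is a $(3n-1)$-space meeting $\E$ in exactly $q^n+1$ elements, one of which is $E$. Projecting from $E$ sends it to the $(2n-1)$-space $\langle\overline{E_1},\overline{E_2}\rangle$, and this space contains exactly $q^n$ elements of $\S_E$. Thus the $(2n-1)$-spaces spanned by pairs of elements of $\S_E$ form a family of ``blocks'', each meeting $\S_E$ in exactly $q^n$ elements (a deficiency-$1$ partial spread), and any two elements of $\S_E$ lie on a unique such block, namely their span.

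The heart of the argument is the resulting count. Fix $X\in\S_E$. Two distinct blocks through $X$ meet $\S_E$ only in $X$ (a second common element $Y$ would force both blocks to equal $\langle X,Y\rangle$), so the blocks through $X$ partition the remaining $q^m-1$ elements of $\S_E$ into sets of size $q^n-1$. Hence there are $(q^m-1)/(q^n-1)$ blocks through $X$, and as this is an integer we need $(q^n-1)\mid(q^m-1)$, which holds precisely when $n\mid m$. This proves the first statement. I expect the main obstacle to be exactly this recognition step: seeing that goodness is equivalent to $\S_E$ carrying a constant-block-size incidence structure; once phrased this way, the divisibility is immediate.

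For the second statement, an egg is in particular a weak egg, and ``good'' presupposes $m>n$, so the first part gives $m=kn$ with $k\geq 2$. Combining this with Theorem~\ref{mogelijkheden}, the relation $ma=n(a+1)$ with $a$ odd becomes $ka=a+1$, i.e.\ $a(k-1)=1$, forcing $a=1$ and $k=2$; hence $m=2n$ and $\PG(2n+m-1,q)=\PG(4n-1,q)$. (In even characteristic one may shortcut this: Theorem~\ref{mogelijkheden} already restricts $m$ to $n$ or $2n$, and $m>n$ then yields $m=2n$ directly.) Note that for odd $q$ the divisibility from the first part is genuinely needed, as it is what rules out solutions of $ma=n(a+1)$ with $a>1$.
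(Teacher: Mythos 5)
Your proof is correct and takes essentially the same approach as the paper's: your count of blocks through $X$ in the quotient $\PG(n+m-1,q)/E$ is exactly the paper's count of the $(3n-1)$-spaces through $\langle E_1,E_2\rangle$ meeting $\E$ in a pseudo-oval (the two families correspond bijectively under projection from the good element), both giving $(q^m-1)/(q^n-1)\in\mathbb{Z}$ and hence $n\mid m$. Your handling of the second part — combining $m=kn$, $k\geq 2$, with Theorem~\ref{mogelijkheden} to force $a=1$ and $m=2n$, with the direct shortcut for $q$ even — likewise matches the paper's argument.
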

\begin{proof}
Consider a weak egg $\E$ of $\PG(2n+m-1,q)$, $m>n$, good at an element $E_1 \in \E$. Consider a second element $E_2 \in \E \backslash\{E_1\}$. For every element $E\in \E\setminus\{E_1,E_2\}$, the $(3n-1)$-space $\langle E, E_1,E_2\rangle$ intersects $\E$ in a pseudo-oval. By considering the elements of $\E\setminus\{E_1,E_2\}$, we find a set $\T$ of  $(3n-1)$-spaces containing $\langle E_1,E_2\rangle$, such that each space of $\T$ intersects $\E$ in a pseudo-oval. Every two spaces in $\T$ meet exactly in $\langle E_1,E_2\rangle$ and $\E$ is the union of the pseudo-ovals $\{T\cap \E|T\in \T\}$. The set $\T$ contains $\frac{q^m-1}{q^n-1}$ $(3n-1)$-spaces; as $q^n-1$ has to be a divisor of $q^m-1$, it follows that $n$ is a divisor of $m$.

Suppose $\E$ is an egg. For $q$ even, by Theorem \ref{mogelijkheden}, eggs only exist in $\PG(4n-1,q)$ (or $\PG(3n-1,q)$).
Consider now a good egg of $\PG(2n+m-1,q)$, $q$ odd, where $m$ is a multiple of $n$. By Theorem \ref{mogelijkheden}, $m=\frac{a+1}{a}n$, for some odd integer $a$, so we conclude that $m=2n$.
\end{proof}

We will show that the good elements of an egg are exactly those inducing a partial spread which is extendable to a Desarguesian spread.
Part $(i)$ of the following theorem, for $\E$ an egg, is mentioned in \cite[Remark 5.1.7]{TGQ}.

\begin{theorem}\label{good=Desarguesian}${}$

\begin{enumerate}

\item[$(i)$] If a weak egg $\E$ in $\PG(2n+m-1,q)$ is good at an element $E$, then $E$ induces a partial spread which extends to a Desarguesian spread.

\item[$(ii)$] Let $\E$ be a weak egg in $\PG(2n+m-1,q)$ for $q$ odd and an egg in $\PG(2n+m-1,q)$ for $q$ even. If an element $E \in \E$ induces a partial spread which extends to a Desarguesian spread, then $\E$ is good at $E$.
\end{enumerate}
\end{theorem}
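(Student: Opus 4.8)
The plan is to prove both directions by passing to the quotient $\PG(2n+m-1,q)/E\cong\PG(n+m-1,q)$, in which the partial spread $\S$ induced by $E$ consists of $q^{m}$ mutually disjoint $(n-1)$-spaces, and to rephrase the goodness condition as the statement that every $(2n-1)$-space spanned by two elements of $\S$ meets $\S$ in exactly $q^{n}$ of them. By Lemma~\ref{m=2n} the relevant case is $m=2n$, so the quotient is $\PG(3n-1,q)$ and a Desarguesian spread there corresponds, via field reduction, to the points of $\PG(2,q^{n})$; this dictionary is what I will exploit.

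For $(i)$, assume $\E$ is good at $E$. Fixing a second element and projecting from $E$, the decomposition of Lemma~\ref{m=2n} yields a pencil of $q^{n}+1$ secant $(2n-1)$-spaces through a fixed element of $\S$, pairwise meeting exactly in that element and covering $\PG(3n-1,q)$; each secant meets $\S$ in $q^{n}$ elements, i.e.\ in a partial spread of deficiency $1$. By the extension result of Beutelspacher, each secant carries a unique \emph{hole}, the $(n-1)$-space of its points not covered by $\S$. I would next identify each hole with the projection from $E$ of the tangent space at $E$ of the pseudo-oval $T\cap\E$, and show that the union $W$ of all holes is a single $(2n-1)$-space: in the egg case $W$ is precisely the image of the tangent space $T_{E}$, since $T_{E}\cap T$ is the tangent $(2n-1)$-space of $T\cap\E$ at $E$ and the spaces $T$ cover $\PG(4n-1,q)$. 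Once $W$ is a subspace, the holes form a spread of $W$ and $\D:=\S\cup\{\text{holes}\}$ is a spread of $\PG(3n-1,q)$ of the correct size $q^{2n}+q^{n}+1$.

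It then remains to prove that $\D$ is \emph{normal}, after which the fact that a normal spread of $\PG(3n-1,q)$ is Desarguesian finishes the argument. The span of two elements of $\S$ is partitioned by $\D$ by construction, and the span of two holes is $W$; the genuine difficulty — and the step I expect to be the main obstacle — is to show that the hole of an \emph{arbitrary} secant $\langle X,Y\rangle$, with $X,Y\in\S$, is again one of the chosen holes, equivalently that the hole-partition of $W$ is independent of the centre of the pencil producing it. I would establish this consistency by a Desargues-type argument comparing the pencils through two elements of $\S$ that lie in a common secant, using that each hole is intrinsic to its secant by the uniqueness in Beutelspacher's theorem. This is exactly the point where the global Desarguesian structure must be extracted from the purely local deficiency-$1$ data, and it is the heart of the proof.

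For $(ii)$ I would reverse the dictionary. Given that $\S$ extends to a Desarguesian spread $\D$, identify $\D$ with the points of $\PG(2,q^{n})$, so that $\S$ corresponds to a set $S^{\ast}$ of $q^{2n}$ points and, by normality, a secant $(2n-1)$-space corresponds to a line of $\PG(2,q^{n})$ meeting $S^{\ast}$ in $|\,\text{line}\cap S^{\ast}|$ points. Since the elements of $\E$ inside a $(3n-1)$-space through $E$ form a pseudo-arc, the pseudo-arc bound forces every secant line to meet $S^{\ast}$ in at most $q^{n}$ points when $q$ is odd; hence no line lies in $S^{\ast}$, the complement $C=\PG(2,q^{n})\setminus S^{\ast}$ meets every line, and $C$ is a blocking set of size $q^{n}+1$. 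As a blocking set of minimum size is a line, $S^{\ast}=\AG(2,q^{n})$, so every secant line meets $S^{\ast}$ in exactly $q^{n}$ points; translating back, every $(3n-1)$-space through $E$ with at least two further elements meets $\E$ in exactly $q^{n}+1$ elements, i.e.\ $\E$ is good at $E$. The remaining work is the case $q$ even, where the bound a priori permits a secant line with $q^{n}+1$ points of $S^{\ast}$, that is a pseudo-hyperoval; this is the step needing the egg hypothesis, and I would exclude it by intersecting the tangent space $T_{F}$ of the egg at an element $F$ of the underlying pseudo-oval with the ambient $(3n-1)$-space, obtaining a $(2n-1)$-space through $F$ skew to all other elements, which contradicts the fact that every tangent space of a pseudo-oval in even characteristic contains the nucleus.
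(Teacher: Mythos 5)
Your part (i) is not a proof: after setting up the correct objects (the quotient $\S=\E/E$, the deficiency-one partial spread in each secant $(2n-1)$-space and its unique hole given by \cite{Beutelspacher}), you defer the entire content of the theorem --- that the holes of the various secants are mutually consistent, so that $\S$ together with the holes is a well-defined spread, and that this spread is normal --- to an unspecified ``Desargues-type argument''. You yourself flag this as the heart of the proof, and it is; omitting it leaves a genuine gap. The paper closes exactly this step with a short count: take $S_1,S_2,S_3\in\S$ spanning a $(3n-1)$-space $\pi$; inside $\pi$ there are $q^n$ secants through $S_1$, each containing $q^n$ elements of $\S$, and likewise through $S_2$; any secant through $S_1$ meets any secant through $S_2$ in exactly an $(n-1)$-space, which must be either an element of $\S$ or the unique hole of both. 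Hence $\pi$ contains $q^{2n}$ elements of $\S$, every element of $\S$ meeting $\pi$ lies in $\pi$, and, $S_1,S_2,S_3$ being arbitrary, any two intersecting secants share an element of $\S$ or share their hole. That is precisely the consistency-plus-normality you are missing, and a normal spread is Desarguesian by \cite{BarlottiCofman}. A second flaw in (i): you identify the union of the holes with the projection of the tangent space $T_E$, but part (i) concerns weak eggs, for which a tangent space is not known to exist at this stage --- its existence (Lemma \ref{tangentspace}) is deduced in the paper \emph{from} this theorem, so that identification is circular.

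Your blanket reduction to $m=2n$ ``by Lemma \ref{m=2n}'' is also unjustified: that lemma forces $m=2n$ only for good \emph{eggs}; for good weak eggs it yields merely $n\mid m$, and in part (ii) goodness is not a hypothesis at all, so the lemma cannot be invoked there. The theorem is stated for general $m$, and the paper accordingly works in $\PG(m/n,q^n)$ and applies the Bose--Burton theorem \cite{BoseBurton} for blocking sets in arbitrary dimension; your planar statement ``a blocking set of minimum size is a line'' is only its case $m=2n$. Apart from this, your part (ii) is sound: the odd case coincides with the paper's argument, and your even-case idea also works --- if a $(3n-1)$-space $\Pi$ through $E$ met $\E$ in a pseudo-hyperoval $\mathcal{H}$, then for $F\in\mathcal{H}$ the space $T_F\cap\Pi$ would be a $(2n-1)$-space through $F$ skew from the other elements of $\mathcal{H}$, which is impossible simply because projecting $\mathcal{H}$ from $F$ gives a spread of $\PG(2n-1,q)$ (no appeal to the nucleus of pseudo-ovals in even characteristic is needed). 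Note, though, that the paper's egg case is more direct: the elements of $\D\setminus\S$ cover exactly $T_E/E$, hence span an $(m-1)$-space, and goodness follows immediately.
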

\begin{proof}
(i) Suppose $\E$ is a weak egg which is good at $E$. Consider the partial spread $\S$ of $\PG(n+m-1,q)$ of size $q^{m}$ induced by $E$. Because $\E$ is good at $E$, any two elements of $\S$ span a $(2n-1)$-space which contains a partial spread of $q^{n}$ elements of $\S$. This partial spread has deficiency 1, so extends uniquely to a spread by one $(n-1)$-space (by \cite{Beutelspacher}).

%If two distinct $(2n-1)$-spaces spanned by elements of $\S$, have a point in common which belongs to an element of $\S$, then clearly they meet exactly in this element. We will prove that if they have a point in common, which does not belong to an element of $\S$, then they meet in the $(n-1)$-space which extends the partial spreads of both $(2n-1)$-spaces to a spread.

Consider three elements $S_1, S_2, S_3 \in \S$ not lying in the same $(2n-1)$-space, hence spanning a $(3n-1)$-space $\pi$.
There are $q^n$ elements of $\S$ contained in $\langle S_2, S_3\rangle$.
For every element $R$ of $\S \cap \langle S_2, S_3\rangle$, the $(2n-1)$-space $\langle S_1,R\rangle$ contains $q^n$ elements of $\S$. Hence, there are $q^n$ $(2n-1)$-spaces of $\pi$ containing $S_1$ and $q^n-1$ other elements of $\S$.
Similarly, there are $q^n$ $(2n-1)$-spaces of $\pi$ containing $S_2$ and $q^n-1$ other elements of $\S$.
Since $\pi$ has dimension $3n-1$, two such distinct $(2n-1)$-spaces, one containing $S_1$ and the other containing $S_2$, intersect in at least an $(n-1)$-space, hence, in exactly an $(n-1)$-space. This space is either an element of $\S$ or the $(n-1)$-space which extends both of them to a spread.
It follows that there are $q^{2n}$ elements of $\S$ contained in $\pi$ and if an element of $\S$ intersects $\pi$, then it is contained in $\pi$. Hence, if $\langle S_2, S_3\rangle$ meets a $(2n-1)$-space spanned by $S_1$ and an other element of $\S$, then they meet in an $(n-1)$-space.

%We see that any two distinct $(2n-1)$-spaces spanned by elements of $\S\cap \pi$ either intersect in an element of $\S$ or they intersect in the $(n-1)$-space which extends both of them to a spread. Hence, $\S \cap \pi$ can be uniquely extended to a spread which is normal, thus Desarguesian.

As $S_1, S_2, S_3$ were chosen randomly, it follows in general that if two distinct $(2n-1)$-spaces spanned by elements of $\S$ intersect, then they meet in an $(n-1)$-space. They meet either in an $(n-1)$-space of $\S$ or in the $(n-1)$-space which extends the partial spreads of both $(2n-1)$-spaces to a spread.
We see that $\S$ can be uniquely extended to a spread which is normal, thus Desarguesian.

(ii) Now let $\E$ be an egg if $q$ is even and a weak egg if $q$ is odd. Suppose $E$ induces a partial spread $\S$ of size $q^{m}$ which extends to a Desarguesian $(n-1)$-spread $\D$ of $\PG(n+m-1,q)$, hence $m=kn$ for some $k>1$.
There are $\frac{q^m-1}{q^n-1}$ elements of $\D$ not contained in $\S$.

When $\E$ is an egg, the elements of $\D\backslash\S$ span a $(m-1)$-space, corresponding to $T_E$. Hence, any $(2n-1)$-space spanned by two elements of $\S$ contains $q^n$ elements of $\S$ and one element $\D\backslash\S$. So, $\E$ is good at $E$.

Suppose $\E$ is a weak egg, with $q$ odd. As $q$ is odd, no $(3n-1)$-space intersects $\E$ in a pseudo-hyperoval. Hence, any $(3n-1)$-space containing $E$ intersects $\E$ in at most $q^n+1$ elements, so any $(2n-1)$-space spanned by two elements of $\S$ can contain at most $q^n$ elements of $\S$. Hence, any such space must contain at least one element of $\D\backslash\S$.
By field reduction, the elements of the Desarguesian spread $\D$ of $\PG(n+m-1,q)$  are in one-to-one correspondance with the points of $\PG(\frac{m}{n},q^n)$. Any $(2n-1)$-space spanned by two elements of $\D$ must contain at least one element of $\D\backslash\S$. Hence, the points corresponding to $\D\backslash\S$ form a line-blocking set of $\PG(\frac{m}{n},q^n)$. Since $|\D\backslash\S|=\frac{q^m-1}{q^n-1}$, from \cite{BoseBurton} it follows that the points corresponding to $\D\backslash\S$ are the points of a $(\frac{m}{n}-1)$-space, hence the elements of $\D\backslash\S$ span a $(m-1)$-space.
As before, it follows that $\E$ is good at $E$.
\end{proof}

The following corollary, for $\E$ an egg, was also mentioned in \cite[Theorem 4.3.4]{ThasGQ1} in terms of translation generalised quadrangles.

\begin{corollary}\label{odd good= pseudo-conic}
If a weak egg $\E$, $q$ odd, is good at an element $E$, then every pseudo-oval of $\E$ containing $E$ is a pseudo-conic.
\end{corollary}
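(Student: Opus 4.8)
The plan is to combine Theorem~\ref{good=Desarguesian}(i) with Theorem~\ref{Casse} (Casse's characterisation of pseudo-conics via Desarguesian induced spreads). The corollary concerns a weak egg $\E$ in odd characteristic that is good at an element $E$, and asserts that any pseudo-oval of $\E$ through $E$ is a pseudo-conic. The key observation is that goodness at $E$ already produces pseudo-ovals: by the remark preceding Lemma~\ref{m=2n}, for any two further elements $E_1,E_2\in\E\setminus\{E\}$ the $(3n-1)$-space $\langle E,E_1,E_2\rangle$ meets $\E$ in a pseudo-oval $\O$ containing $E$. So every pseudo-oval of $\E$ through $E$ arises this way, and it suffices to show each such $\O$ is a pseudo-conic.

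First I would apply Theorem~\ref{good=Desarguesian}(i): since $\E$ is good at $E$, the element $E$ induces a partial spread in $\PG(2n+m-1,q)/E$ that extends to a \emph{Desarguesian} spread $\D$. Now fix a pseudo-oval $\O\subseteq\E$ containing $E$, spanning the $(3n-1)$-space $\pi=\langle\O\rangle$. The crucial step is to transfer the Desarguesian property from the global quotient to the quotient $\pi/E\cong\PG(2n-1,q)$. Projecting from $E$, the elements of $\O\setminus\{E\}$ induce a partial spread $\S_\O$ of $\pi/E$, and this is precisely the restriction to $\pi/E$ of the partial spread induced by $E$ in the whole space. Because $\D$ is normal (Desarguesian spreads are normal, as recalled in the Preliminaries), the elements of $\D$ meeting $\pi/E$ form a subspread of a $\PG(2n-1,q)$ contained in $\PG(n+m-1,q)$; a subgeometry of a Desarguesian spread obtained by intersecting with a subspace spanned by spread elements is again Desarguesian. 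Hence the partial spread induced by $E$ within $\pi$ extends to a Desarguesian spread of $\pi/E\cong\PG(2n-1,q)$.

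Therefore, for the pseudo-oval $\O$ in $\PG(3n-1,q)=\pi$, the element $E\in\O$ induces a partial spread extending to a Desarguesian spread. Since $q$ is odd, Theorem~\ref{Casse} now applies directly to $\O$ and yields that $\O$ is a pseudo-conic. As $\O$ was an arbitrary pseudo-oval of $\E$ through $E$, every such pseudo-oval is a pseudo-conic, which is the claim.

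The main obstacle I anticipate is the restriction argument in the second paragraph: one must verify cleanly that the Desarguesian spread $\D$ of the full quotient $\PG(n+m-1,q)$ restricts to a Desarguesian spread on the subquotient $\pi/E\cong\PG(2n-1,q)$ induced by the pseudo-oval. This rests on the normality of $\D$ together with the fact that a $(2n-1)$-subspace spanned by spread elements inherits a normal (hence, for this dimension, regular/Desarguesian) subspread. One subtlety is that Theorem~\ref{Casse} is stated for pseudo-\emph{ovals}, so I must ensure $\O$ really has the full $q^n+1$ elements; this is exactly the content of goodness at $E$, which guarantees that $\langle E,E_1,E_2\rangle\cap\E$ is a genuine pseudo-oval rather than a smaller pseudo-arc. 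Once these two points are in place, the proof is a short chain of citations.
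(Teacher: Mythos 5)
Your proposal is correct and follows essentially the same route as the paper: apply Theorem~\ref{good=Desarguesian}$(i)$ to get a Desarguesian extension of $\E/E$ in the quotient, restrict it to the $(2n-1)$-space $\langle\O\rangle\cap\Pi$ spanned by elements of $\O/E$, and invoke Theorem~\ref{Casse}. One small caution: the restriction step is justified by the fact that a Desarguesian spread induces a Desarguesian spread on any subspace spanned by spread elements (via field reduction), not by normality alone --- normality of a spread of a $(2n-1)$-space is vacuous, so your parenthetical ``hence, for this dimension, regular/Desarguesian'' is not the right reason, though the fact you cite just before it is exactly what is needed.
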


\begin{proof}
Let $\Pi$ be a $(n+m-1)$-space disjoint from $E$. By Theorem \ref{good=Desarguesian}, the partial spread $\E / E$ in $\Pi$ extends to a Desarguesian spread. Consider a pseudo-oval $\O$ of $\E$ containing $E$. The $q^n$ elements of $\O / E$ are contained in $\E/E$ and thus extend to a Desarguesian spread of the $(2n-1)$-space $\langle\O\rangle\cap\Pi$.

The element $E$ of the pseudo-oval $\O$ induces a partial spread $\O / E$ which extends to a Desarguesian spread, hence, by Theorem \ref{Casse}, the statement follows.
\end{proof}

\section{Characterising good eggs and translation generalised quadrangles of order $(q^n,q^{2n})$}\label{3twogood}
\subsection{Eggs with two good elements}
%The standard example of an ovoid in $\PG(3,q)$ is an elliptic quadric $Q^-(3,q)$.
 An elementary pseudo-ovoid that arises from applying field reduction to an elliptic quadric is called {\em classical}. We recall the following theorem from \cite{TGQ}.
\begin{theorem}{\rm \cite[Theorem 5.1.12%{\color{red} Other reference than TGQ?
]{TGQ}}

If $q$ is odd and an egg $\E$ in $\PG(4n-1,q)$ has at least two
good elements, then $\E$ is classical.
% and so the TGQ T(O) is classical.
If $q$ is even and an egg $\E$ in $\PG(4n-1,q)$ has at least four good elements, not
contained in a common pseudo-oval on $\E$, then $\E$ is elementary.
%and so T(O) is a T3(0') of Tits.
\end{theorem}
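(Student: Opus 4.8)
The plan is to reduce both statements to producing a single Desarguesian spread of the ambient $\PG(4n-1,q)$ that contains every element of $\E$, since by definition this is exactly what it means for $\E$ to be elementary. For the odd case I would first observe that classicality then comes for free: by the theorem of Barlotti and Panella every ovoid of $\PG(3,q^n)$, $q$ odd, is an elliptic quadric, so once $\E$ is shown to be the field reduction of some ovoid, that ovoid is automatically an elliptic quadric and $\E$ is classical. Thus the entire odd case reduces to proving elementarity from two good elements.

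To build the global spread I would start from one good element $E_1$. By Theorem \ref{good=Desarguesian}$(i)$, $E_1$ induces a partial spread that extends to a Desarguesian spread $\D_1$ of the quotient $\PG(4n-1,q)/E_1 \cong \PG(3n-1,q)$; via field reduction this quotient is a $\PG(2,q^n)$ in which the projections of $\E \setminus \{E_1\}$ fill out the affine points off the line corresponding to $T_{E_1}$. Each element of $\D_1$ pulls back to a $(2n-1)$-space through $E_1$, and the task is to refine each such space into a regulus of $(n-1)$-spaces through $E_1$ so that these reguli glue into one normal, hence Desarguesian, spread $\D^*$ of $\PG(4n-1,q)$ with $E_1 \in \D^*$ and $\E \subseteq \D^*$. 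The candidate reguli are supplied by the pseudo-ovals through $E_1$: by Corollary \ref{odd good= pseudo-conic} each is a pseudo-conic, and a pseudo-conic lies in a Desarguesian spread of its $(3n-1)$-span (the one making it elementary, as in Theorem \ref{Casse}), which provides the required refinement. The second good element $E_2$ is what forces these local spreads to be mutually compatible: the Desarguesian spread induced by $E_2$ in its own quotient must agree, along every common pseudo-oval, with the structure coming from $E_1$, and this cross-checking pins down a single $\F_{q^n}$-structure on the whole space.

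I expect the gluing step to be the crux. Having a Desarguesian spread in each quotient, and a Desarguesian spread in each $(3n-1)$-subspace spanned by a pseudo-conic, does not by itself yield a global Desarguesian spread; one must verify that all these partial structures induce one and the same field extension $\F_{q^n}$ of $\F_q$ acting on $\PG(4n-1,q)$, including a consistent action on $E_1$ itself. This is precisely the point at which a single good element is insufficient (non-elementary pseudo-ovoids with one good element exist) and where the hypothesis of two good elements, together with the rigidity of conics in odd characteristic, becomes decisive. Once $\D^*$ is assembled, the pseudo-cap condition on $\E$ transfers to the fact that the corresponding $q^{2n}+1$ points of $\PG(3,q^n)$ form a cap, which by maximality is an ovoid, completing the proof that $\E$ is elementary.

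For the even case the same strategy applies, but the Segre-type input disappears: in even characteristic there is no guarantee that a pseudo-oval through a good element is a pseudo-conic, and the relevant elementarity criterion from \cite{PsO} requires that \emph{every} element of a pseudo-oval induce a Desarguesian spread, not merely one. A single good element of $\E$ only makes itself good on each pseudo-oval through it, so to accumulate enough good elements on the pseudo-ovals that must be shown elementary I would use four good elements in general position; the hypothesis that they lie on no common pseudo-oval guarantees that the pseudo-ovals spanned by pairs among them each collect several good elements. Forcing those pseudo-ovals to be elementary then propagates, via the same gluing argument, to a global Desarguesian spread and hence to elementarity of $\E$, while classicality is not claimed because for $q$ even ovoids of $\PG(3,q^n)$ need not be elliptic quadrics. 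The main obstacle remains the compatible gluing of the local spreads, now made harder by the weaker even-characteristic tools, which is exactly why more good elements are demanded here.
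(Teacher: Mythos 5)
You correctly reduce the odd case to elementarity (via Barlotti--Panella) and correctly begin from Theorem~\ref{good=Desarguesian}$(i)$, but your proof stops exactly where it has to start: the gluing of the local Desarguesian structures into one spread of $\PG(4n-1,q)$ is named by you as ``the crux'' and then never carried out. Asserting that the second good element ``pins down a single $\F_{q^n}$-structure'' is a restatement of the goal, not an argument, and the pseudo-conic refinement you propose (a Desarguesian spread inside each $(3n-1)$-space spanned by a pseudo-oval through $E_1$) leaves open precisely the mutual compatibility of all these spreads. The mechanism that closes this gap in the paper is concrete and quite different: choose $E_3,E_4\in\E$ with $\langle E_1,E_2,E_3,E_4\rangle=\PG(4n-1,q)$ (Lemma~\ref{disjoint} makes this possible); the Desarguesian extensions $\D_1\subset\langle E_2,E_3,E_4\rangle$ and $\D_2\subset\langle E_1,E_3,E_4\rangle$ of the partial spreads induced by $E_1$ and $E_2$ each meet $\langle E_3,E_4\rangle$ in a Desarguesian spread, and both of these spreads contain the $q^n+1$ spaces in which the $(3n-1)$-spaces through $\langle E_1,E_2\rangle$ carrying egg elements meet $\langle E_3,E_4\rangle$; since a spread of $\langle E_3,E_4\rangle$ has exactly $q^n+1$ members, the two coincide. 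Lemma~\ref{UniqueDesSpread} --- a uniqueness statement of which your plan has no analogue --- then produces a single Desarguesian spread $\D$ of $\PG(4n-1,q)$ containing $\D_1\cup\D_2$, and normality finishes the proof: every remaining element of $\E$ equals $\langle E_1,X\rangle\cap\langle E_2,Y\rangle$ with $X\in\D_1$, $Y\in\D_2$, hence lies in $\D$. Without some such uniqueness-plus-common-subspace argument, your outline is a plan, not a proof.

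Your even case would moreover fail even if the gluing were granted. The theorem you invoke from \cite{PsO} requires $n$ prime and that \emph{all} $q^n+1$ elements of a pseudo-oval induce Desarguesian spreads; since the four good elements are not on a common pseudo-oval, any pseudo-oval of $\E$ contains at most three of them, so that hypothesis can never be met, and the statement to be proved carries no restriction on $n$. In the paper's argument the parity of $q$ is irrelevant once the two induced Desarguesian extensions exist (Theorem~\ref{good=Desarguesian}$(i)$ holds for any $q$), which is exactly why the paper obtains the stronger conclusion that two good elements suffice for $q$ even as well --- improving the cited theorem rather than merely reproving it --- with classicality for $q$ odd following, as you note, from Barlotti--Panella.
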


It was open problem whether, for $q$ even, being good at two elements is sufficient to be elementary, this was posed as Problem A.5.6 in \cite{TGQ}. We will give an affirmative answer to this question in a more general setting, namely in terms of pseudo-caps. We first need two lemma's concerning Desarguesian spreads.

\begin{lemma}{\rm \cite[Corollary 1.8]{PsO}}\label{regpluseen}
Consider two Desarguesian $(n-1)$-spreads $\S_1$ and $\S_2$ in $\PG(2n-1,q)$, $q>2$. If $\S_1$ and $\S_2$ have at least $3$ elements in common, then they share exactly $q^t+1$ elements for some $t|n$.
\end{lemma}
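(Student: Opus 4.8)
The plan is to translate the statement, via field reduction, into a purely combinatorial fact about subsets of $\PG(1,q^n)$ that are closed under taking $\F_q$-sublines. Since $\S_1$ is Desarguesian, I may assume it arises by field reduction from $\PG(1,q^n)$, so that its elements are labelled by the points of $\PG(1,q^n)$. The crucial dictionary is then that a \emph{regulus} contained in $\S_1$ (a set of $q+1$ elements of $\S_1$ lying in a common $(2n-1)$-space and meeting all their transversals) corresponds exactly to an $\F_q$-subline $\PG(1,q)$ of $\PG(1,q^n)$. Now take three common elements $A,B,C\in\S_1\cap\S_2$. Any three pairwise skew $(n-1)$-spaces of $\PG(2n-1,q)$ lie on a unique regulus $\R$, and a Desarguesian spread contains the regulus through any three of its elements; applying this to both spreads forces $\R\subseteq\S_1$ and $\R\subseteq\S_2$, hence $\R\subseteq\S_1\cap\S_2$. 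Writing $\Sigma\subseteq\PG(1,q^n)$ for the label set of $\S_1\cap\S_2$, this says precisely that $|\Sigma|\ge 3$ and that the $\F_q$-subline through any three points of $\Sigma$ is again contained in $\Sigma$.

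It then remains to show that such a $\Sigma$ is the point set of a subline $\PG(1,q^t)$ over a subfield $\F_{q^t}$ with $t\mid n$; as $|\PG(1,q^t)|=q^t+1$ this is exactly the assertion. Using the sharply $3$-transitive action of $\PGL(2,q^n)$ on $\PG(1,q^n)$ (which stabilises $\S_1$, sends Desarguesian spreads to Desarguesian spreads, and only replaces $\S_2$ by another spread with the same intersection number), I would first normalise three points of $\Sigma$ to $0,1,\infty$, so that $\F_q\cup\{\infty\}\subseteq\Sigma$. Setting $T:=\Sigma\setminus\{\infty\}\subseteq\F_{q^n}$, the $\F_q$-sublines of $\Sigma$ through $\infty$ are exactly the $\F_q$-affine lines through pairs of points of $T$; together with $0\in T$, I would deduce that $T$ is an $\F_q$-subspace of $\F_{q^n}$. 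Here the hypothesis $q>2$ enters decisively: an $\F_q$-subline then has at least four points, so closure is a genuine condition, and one has a free parameter $\mu\in\F_q\setminus\{0,1\}$ available with which to realise an arbitrary sum $a+b$ as a point of the subline joining suitable $\F_q$-multiples of $a$ and $b$.

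Finally I would upgrade $T$ from an $\F_q$-subspace to a subfield by feeding triples of \emph{finite} points of $T$ into the closure. The $\F_q$-subline through $0,a,b$ consists of the points $ab(1-\mu)/(b-\mu a)$ with $\mu\in\F_q$, together with $b$; expressions of this shape, combined with the additive closure already established, force $T$ to be closed under products and inverses. A finite subset of $\F_{q^n}$ containing $1$ and closed under addition and multiplication is a subfield, necessarily $\F_{q^t}$ for some $t\mid n$, whence $\Sigma=\F_{q^t}\cup\{\infty\}$ and $|\S_1\cap\S_2|=|\Sigma|=q^t+1$. The main obstacle is precisely this last step: extracting genuine multiplicative closure from the rational expressions produced by the sublines, uniformly in the characteristic, so that the convenient polarisation device $ab=\tfrac12((a+b)^2-a^2-b^2)$ — unavailable when $q$ is even — is never needed. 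This is where the bookkeeping is heaviest, though it stays elementary once the right sublines are chosen.
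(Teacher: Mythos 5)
First, a point of comparison: the paper does not prove this lemma at all --- it is imported verbatim from \cite{PsO} (Corollary 1.8 there), so there is no in-paper argument to measure your proposal against; it has to stand on its own. Its first two stages do stand. The translation of the hypothesis into the statement that the label set $\Sigma\subseteq\PG(1,q^n)$ of $\S_1\cap\S_2$ is closed under $\F_q$-sublines is correct: three mutually disjoint $(n-1)$-spaces lie on a unique regulus, both spreads are regular, and reguli contained in $\S_1$ correspond exactly to sublines under field reduction. Your derivation that $T=\Sigma\setminus\{\infty\}$ is an $\F_q$-subspace after normalising three points to $0,1,\infty$ is also sound, and this is indeed exactly where $q>2$ enters (for $q=2$ subline-closure is vacuous and the lemma fails).

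The genuine gap is the step you yourself flag as ``the main obstacle'': upgrading the $\F_q$-subspace $T$ to a subfield. This is not bookkeeping; it is the core of the lemma, and in even characteristic it needs an idea that your sketch does not contain. What the sublines through $0$ give cheaply is closure under inversion: from the subline through $0,1,a$ (with $a\in T\setminus\F_q$) one gets $\frac{a(1-\mu)}{a-\mu}\in T$, hence $\mu(1-\mu)(a-\mu)^{-1}\in T$ for all $\mu\in\F_q$, hence $(a-\mu)^{-1}\in T$ for $\mu\neq 0,1$, and translating $a$ by elements of $\F_q$ then yields $a^{-1}\in T$. But ``closed under subtraction, $\F_q$-scalars and inversion'' does not visibly give products: the identity $a^2=a+\bigl((a-1)^{-1}-a^{-1}\bigr)^{-1}$ gives closure under squaring, after which polarisation finishes odd characteristic --- precisely the device you correctly observe is unavailable for $q$ even. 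For $q$ even one needs a genuinely different mechanism, for instance the commutative form of Hua's identity, $a-\bigl(a^{-1}+(b^{-1}-a)^{-1}\bigr)^{-1}=a^2b$, which shows $a^2b\in T$ for all $a,b\in T$; one then considers the multiplier set $F=\{x\in\F_{q^n}: xT\subseteq T\}$, which is a subfield over which $T$ is a vector space, notes that $\{a^2: a\in T\}\subseteq F$, and uses injectivity of squaring in characteristic $2$ to force $|T|\leq|F|$ and hence $T=F$. Some argument of this kind (or a citation of the known classification of subline-closed sets) is indispensable, and none appears in your proposal; as written, the proof is incomplete exactly in the case ($q$ even) for which the present paper needs the lemma.
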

The following lemma is a generalisation of \cite[Lemma 1.4]{PsO} and the proof is analogous. We introduce some necessary definitions and notations.

A {\em regulus} $\R$ in $\PG(2n-1, q)$ is a set of $q + 1$ mutually
disjoint $(n - 1)$-spaces having the property that if a line meets 3 elements
of $\R$, then it meets all elements of $\R$. Let us denote the unique regulus through $3$ mutually disjoint $(n-1)$-spaces $A,B$ and $C$ in $\PG(2n-1,q)$ by $\R(A,B,C)$.
Every Desarguesian spread $\D$ has the property that for $3$ elements $A,B,C$ in $\D$, the elements of $\R(A,B,C)$ are also contained in $\D$, i.e. $\D$ is regular (see also \cite{Br}).

We will use the following notation for points of a projective space $\PG(r-1,q^n)$. A point $P$ of $\PG(r-1,q^n)$ defined by a vector $(x_1,x_2,\ldots,x_r)\in (\F_{q^n})^r$ is denoted by $\F_{q^n}(x_1,x_2,\ldots,x_r)$, reflecting the fact that every $\F_{q^n}$-multiple of $(x_1,x_2,\ldots,x_r)$ gives rise to the point $P$.
We can identify the vector space $\F_{q^{nr}}$ with $(\F_{q^n})^r$, and hence write every point of $\PG(rn-1,q)$ as $\F_q(x_1,\ldots,x_r)$ with $x_i \in \F_{q^n}$. In this way, by field reduction, a point $\F_{q^n}(x_1,\ldots,x_r)$ in $\PG(r-1,q^n)$ corresponds to the $(n-1)$-space  $\F_{q^n}(x_1,\ldots,x_r) = \{  \F_{q}(\alpha x_1,\ldots,\alpha x_r) | \alpha \in \F_{q^n}\}$ of $\PG(rn-1,q)$.

\begin{lemma}\label{UniqueDesSpread}
Let $\D_1$ be a Desarguesian $(n-1)$-spread in a $(kn-1)$-dimensional subspace $\Pi$ of $\PG((k+1)n-1,q)$, let $\mu$ be an element of $\D_1$ and let $E_1$ and $E_2$ be mutually disjoint $(n-1)$-spaces such that $\langle E_1,E_2\rangle$ meets $\Pi$ exactly in the space $\mu$. Then there exists a unique Desarguesian $(n-1)$-spread of $\PG((k+1)n-1,q)$ containing $E_1$, $E_2$ and all elements of $\D_1$.
\end{lemma}
\begin{proof} Since $\D_1$ is a Desarguesian spread in $\Pi$, we can choose coordinates for $\Pi$ such that $\D_1=\{\F_{q^n}(x_1,x_2,\ldots,x_{k})|x_i\in \F_{q^n}\}$ and $\mu=\F_{q^n}(0,\ldots,0,1)$. We embed $\Pi$ in $\PG((k+1)n-1,q)$ by mapping a point $\F_q(x_1,\ldots,x_{k})$, $x_i\in \F_{q^n}$, of $\Pi$ onto $\F_q(x_1,\ldots,x_{k},0)$. Let $\ell_P$ denote the unique transversal line through a point $P$ of $\mu$ to the regulus $\R(\mu,E_1,E_2)$.

We can still choose coordinates for $n+1$ points in general position in $\PG((k+1)n-1,q)\setminus \Pi$. We will choose these $n+1$ points such that $n$ of them belong to $E_1$ and one of them belongs to $E_2$.
Consider a set $\{y_i|i=1,\ldots,n\}$ forming a basis of $\F_{q^n}$ over $\F_q$. We may assume that the line $\ell_{P_i}$ through $P_i=\F_q(0,\ldots,0,y_i,0)\in\mu$ meets $E_1$ in the point $\F_q(0,\ldots,0,0,y_i)$. It follows that $E_1=\F_{q^n}(0,\ldots,0,0,1)$. Moreover, we may assume that $\ell_Q$ with $Q=\F_q(0,\ldots,0,0,\sum_{i=1}^n y_i,0)\in \mu$ meets $E_2$ in $\F_q(0,\ldots,0,\sum_{i=1}^n y_i,\sum_{i=1}^n y_i)$. Since $\F_q(0,\ldots,0\sum_{i=1}^n y_i,\sum_{i=1}^n y_i)$ has to be in the space spanned by the intersection points $R_i = \ell_{P_i} \cap E_2$, it follows that $R_i=\F_q(0,\ldots,0,y_i,y_i)$ and consequently, that $E_2=\F_{q^n}(0,\ldots,0,1,1)$.

It is clear that the Desarguesian spread $\D=\{\F_{q^n}(x_1,\ldots,x_{k+1})|x_i\in \F_{q^n}\}$ contains the spread $\D_1$ and the $(n-1)$-spaces $E_1$ and $E_2$. Moreover, since every element of $\D$, not in $\langle E_1,E_2\rangle$, is obtained as the intersection of $\langle E_1,X\rangle\cap \langle E_2,Y\rangle$, where $X,Y\in \D_1$, it is clear that $\D$ is the unique Desarguesian spread satisfying our hypothesis.
\end{proof}

\begin{lemma}\label{disjoint}
Consider a pseudo-cap $\E$ of $\PG(4n-1,q)$ containing an element $E$ that induces a partial spread which extends to a Desarguesian spread. If $\Pi$ is a $(3n-1)$-space spanned by $E$ and two other elements of $\E$, then every element of $\E$ is either disjoint from $\Pi$ or contained in $\Pi$.
\end{lemma}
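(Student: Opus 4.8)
The plan is to work in the quotient geometry $\PG(4n-1,q)/E \cong \PG(3n-1,q)$ and to reduce the statement to the normality of a Desarguesian spread. First I would record that the elements of a pseudo-cap are pairwise disjoint: if two $(n-1)$-spaces $E,E'$ of $\E$ met, then $\langle E,E'\rangle$ would have dimension at most $2n-2$, and adjoining a third element would span a space of dimension at most $3n-2$, contradicting the defining property that three elements span a $(3n-1)$-space. In particular $E$ is disjoint from every other element, so projection from $E$ is well defined and injective on each element $F\in\E\setminus\{E\}$, sending $F$ onto the $(n-1)$-space $\overline{F}:=\langle E,F\rangle/E$ of $\PG(3n-1,q)$. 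These images are exactly the members of the partial spread $\S_E$ induced by $E$, which by hypothesis extends to a Desarguesian $(n-1)$-spread $\D$ of $\PG(3n-1,q)$; thus $\overline{F}\in\D$ for all such $F$.

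Next I would transfer the geometry of $\Pi$ into the quotient. Since $E\subseteq\Pi$, the image $\overline{\Pi}:=\Pi/E$ is a $(2n-1)$-space, and as $\Pi=\langle E,E_1,E_2\rangle$ for two further elements $E_1,E_2\in\E$, we get $\overline{\Pi}=\langle\overline{E_1},\overline{E_2}\rangle$, the span of two elements of $\D$. The crucial observation is that, because $E\subseteq\Pi$, a point $P\in F$ lies in $\Pi$ if and only if its image lies in $\overline{\Pi}$; since projection from $E$ maps $F$ isomorphically onto $\overline{F}$, this yields $F\subseteq\Pi\iff\overline{F}\subseteq\overline{\Pi}$ and $F\cap\Pi=\emptyset\iff\overline{F}\cap\overline{\Pi}=\emptyset$. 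Hence it suffices to prove the statement at the level of spread elements: every $\overline{F}\in\D$ is either disjoint from $\overline{\Pi}=\langle\overline{E_1},\overline{E_2}\rangle$ or contained in it.

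Finally I would invoke normality. A Desarguesian spread is normal, so $\overline{\Pi}=\langle\overline{E_1},\overline{E_2}\rangle$ is partitioned by the elements of $\D$ that it contains. If $\overline{F}$ met $\overline{\Pi}$ in a point $P$, then $P$ would lie on one of these spread elements, say $\overline{G}\subseteq\overline{\Pi}$; since elements of $\D$ are pairwise disjoint and $P\in\overline{F}\cap\overline{G}$, this forces $\overline{F}=\overline{G}\subseteq\overline{\Pi}$. Thus $\overline{F}$ is either contained in $\overline{\Pi}$ or disjoint from it, and translating back gives the claim (the element $E$ itself is trivially contained in $\Pi$). I do not expect a genuine obstacle here: the content is carried entirely by the passage to the quotient and by the normality of $\D$, both of which are already available. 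The only point deserving care is the verification that projection from $E$ preserves incidence with $\Pi$ and is injective on each $F$, which is precisely where the pairwise disjointness of pseudo-cap elements is used.
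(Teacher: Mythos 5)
Your proposal is correct and follows essentially the same route as the paper: project from $E$ onto the quotient (the paper uses a concrete $(3n-1)$-space $\Sigma$ skew from $E$ rather than the abstract quotient), observe that the image of $\Pi$ is spanned by two elements of the Desarguesian spread, and invoke normality to conclude that any spread element meeting $\Pi/E$ lies inside it, then lift back using $E\subseteq\Pi$. The extra details you supply (pairwise disjointness of pseudo-cap elements, injectivity of the projection, the two containment/disjointness equivalences) are exactly the steps the paper leaves implicit.
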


\begin{proof} Let $\Sigma$ be a $(3n-1)$-space skew from $E$ and consider the induced partial spread $\E/E$ in $\Sigma$. If $F$ is an element of $\E$ which meets $\Pi$, then the projection $F/E$ of $F$ from $E$ onto $\Sigma$ is an element of $\E/E$ which meets the space $\Pi/E$. By assumption, the space $\Pi/E$ is spanned by spread elements of a partial spread extending to a Desarguesian spread. Hence, since a Desarguesian spread is normal, $F/E$ is contained in $\Pi/E$. It follows that since $\Pi$ contains $E$, the element $F$ is contained in $\Pi$. \end{proof}

\begin{theorem}\label{main}
Consider a pseudo-cap $\E$ in $\PG(4n-1,q)$, $q>2$, with $|\E|>q^{n+k}+q^n-q^k+1$, $q$ odd, and $|\E|>q^{n+k}+q^n+2$, $q$ even, where $k$ is the largest divisor of $n$ with $k\neq n$.
 The pseudo-cap $\E$ is elementary if and only if two of its  elements induce a partial spread which extends to a Desarguesian spread.
\end{theorem}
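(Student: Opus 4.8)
The plan is to prove both implications by passing to the Desarguesian spread induced in a quotient and then reconstructing a single Desarguesian spread of the whole space that contains $\E$.

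For the \emph{only if} direction, assume $\E$ is elementary, so $\E$ is a subset of a Desarguesian spread $\D$ of $\PG(4n-1,q)$ obtained by field reduction from a cap of $\PG(3,q^n)$. Fix any element $E\in\E$. Since $\D$ is normal, quotienting by the spread element $E$ induces a Desarguesian spread of $\PG(3n-1,q)\cong\PG(4n-1,q)/E$ containing the induced partial spread $\E/E$. Hence \emph{every} element of $\E$ induces a partial spread extending to a Desarguesian spread; in particular at least two do (the size hypothesis forces $|\E|\geq 2$).

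For the \emph{if} direction, let $E_1,E_2\in\E$ both induce partial spreads extending to Desarguesian spreads. Projecting from $E_1$, the partial spread $\E/E_1$ extends to a Desarguesian spread $\D_1$ of $\PG(3n-1,q)\cong\PG(4n-1,q)/E_1$; identifying $\D_1$ with the point set of $\PG(2,q^n)$ by field reduction, the elements of $\E\setminus\{E_1\}$ correspond to a point set $\C_1$ with $|\C_1|=|\E|-1$. Choosing a $(3n-1)$-space $\Pi$ skew to $E_1$, the projection transports $\D_1$ to a Desarguesian spread of $\Pi$ in which the image $\mu$ of $E_2$ is an element, and $\langle E_1,E_2\rangle\cap\Pi=\mu$; so Lemma \ref{UniqueDesSpread} yields a unique Desarguesian spread $\D$ of $\PG(4n-1,q)$ containing $E_1$, $E_2$ and this spread of $\Pi$, and therefore projecting from $E_1$ onto $\D_1$. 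Because $|\E|-1>q^k+1$, Lemma \ref{regpluseen} also shows that $\D$ is the only Desarguesian extension of $\E/E_2$, so $\D$ projects from $E_2$ onto the analogous spread $\D_2$ as well. It then remains to prove $\E\subseteq\D$: once this holds, $\E$ corresponds to a cap of $\PG(3,q^n)$ and is elementary.

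To show $\E\subseteq\D$, fix $F\in\E\setminus\{E_1,E_2\}$ and set $\Pi_F=\langle E_1,E_2,F\rangle$. Since $E_1$ induces a Desarguesian-extendable partial spread, Lemma \ref{disjoint} shows $\E\cap\Pi_F$ is a pseudo-arc of the $(3n-1)$-space $\Pi_F$, with $E_1$ and $E_2$ both inducing Desarguesian partial spreads of it. For $q$ odd one may invoke Theorem \ref{Casse} to place $\E\cap\Pi_F$ inside a pseudo-conic $c_F$ of $\Pi_F$, which lies in a Desarguesian spread $\D_F$ of $\Pi_F$; comparing $\D_F$ with the spreads induced by $\D$ through the two projections from $E_1$ and $E_2$, and using that they share more than $q^k+1$ elements (guaranteed by the size hypothesis), Lemma \ref{regpluseen} forces these spreads to coincide, whence $F$ equals the intersection of the two projection cones $\langle E_1,F/E_1\rangle\cap\langle E_2,F/E_2\rangle=D_{F/E_1}\in\D$. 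The two thresholds mirror the pseudo-arc bounds $q^n+1$ ($q$ odd) and $q^n+2$ ($q$ even) in $\PG(3n-1,q)$, which govern how many elements of $\E$ can fail to lie in $\D$ across the $q^n+1$ spaces through $\langle E_1,E_2\rangle$; the hypothesis $q>2$ is needed for Lemma \ref{regpluseen}.

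The hard part is precisely this last identification. Knowing $F/E_1\in\D_1$ and $F/E_2\in\D_2$ only places $F$ in the intersection of two $(2n-1)$-spaces, and each projection is automatically consistent with the corresponding global spread; the genuine obstruction is that the \emph{pairing} of the two images realized by $F$ need not be the one realized by the spread element $D_{F/E_1}$. This is exactly why one good element is insufficient and why both the second good element $E_2$ and the numerical bound are indispensable: only by gluing the information from the two projections, with enough common elements to trigger the sharp $q^k+1$ coincidence threshold of Lemma \ref{regpluseen}, can one rule out the discrepant pairings and conclude that every element of $\E$ is the predicted element of $\D$. Making the counting behind the exact thresholds precise, uniformly in odd and even characteristic, is the main technical step to be carried out.
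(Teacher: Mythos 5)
Your ``only if'' direction is fine, and your construction of a candidate spread $\D$ through $E_1$, $E_2$ and the transported spread in $\Pi$ via Lemma \ref{UniqueDesSpread} is a legitimate move, but the core of the ``if'' direction has a genuine gap --- one you concede yourself when you write that the identification of each $F\in\E$ with a spread element ``is the main technical step to be carried out.'' Concretely, two steps fail. First, your claim that $\D$ ``projects from $E_2$ onto the analogous spread $\D_2$'' is unjustified: Lemma \ref{regpluseen} is a statement about two Desarguesian spreads of $\PG(2n-1,q)$, whereas the quotient by $E_2$ is a $\PG(3n-1,q)$; and even granting an analogue there, to apply any such coincidence lemma you must exhibit many elements common to $\D/E_2$ and $\D_2$ --- but the only common element you can certify at that point is $E_1$, because knowing $F/E_1\in\D$ for every $F\in\E$ does not place $F$, nor $F/E_2$, in $\D$: that is exactly what you are trying to prove, so the argument is circular. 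Second, your fallback for $q$ odd, invoking Theorem \ref{Casse} on $\E\cap\Pi_F$, is illegitimate: Theorem \ref{Casse} is about pseudo-ovals (size exactly $q^n+1$), while $\E\cap\Pi_F$ is merely a pseudo-arc, and no even-characteristic counterpart is available at all.

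What is missing is the paper's key manoeuvre, which moves the comparison into a space where Lemma \ref{regpluseen} genuinely applies. Choose $E_3,E_4\in\E$ with $\langle E_1,E_2,E_3,E_4\rangle=\PG(4n-1,q)$ (possible by Lemma \ref{disjoint} and $|\E|>q^n+2$). Let $\D_1$, $\D_2$ be the Desarguesian extensions of the partial spreads induced by $E_1$ in $\langle E_2,E_3,E_4\rangle$ and by $E_2$ in $\langle E_1,E_3,E_4\rangle$, and set $\S_i=\D_i\cap\langle E_3,E_4\rangle$: these are Desarguesian spreads of $\langle E_3,E_4\rangle\cong\PG(2n-1,q)$, the correct ambient space for Lemma \ref{regpluseen}. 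Each $(3n-1)$-space $\langle E_1,E_2,E\rangle$ with $E\in\E\setminus\{E_1,E_2\}$ meets $\langle E_3,E_4\rangle$ in an $(n-1)$-space which, by projecting $E$ from $E_1$ (resp.\ $E_2$) and using normality of $\D_1$ (resp.\ $\D_2$), lies in $\S_1\cap\S_2$. Since such a space contains at most $q^n-1$ ($q$ odd) resp.\ $q^n$ ($q$ even) elements of $\E\setminus\{E_1,E_2\}$, the size hypothesis and the pigeonhole principle force at least $q^k+2$ common elements, whence $\S_1=\S_2$ by Lemma \ref{regpluseen}. Only after this can Lemma \ref{UniqueDesSpread} glue $\D_1$ and $\D_2$ into a single Desarguesian spread $\D$, and normality of $\D$ then identifies each $F\in\E$ with $\langle E_1,F/E_1\rangle\cap\langle E_2,F/E_2\rangle\in\D$. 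This counting inside $\langle E_3,E_4\rangle$ --- which is precisely where the thresholds $q^{n+k}+q^n-q^k+1$ and $q^{n+k}+q^n+2$ come from --- is the step your proposal gestures at but never carries out.
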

\begin{proof}
If $\E$ is elementary, then the elements of $\E$ are contained in a Desarguesian spread of $\PG(4n-1,q)$, so every element of $\E$ induces a partial spread which extends to a Desarguesian spread.

Now suppose that $\E$ contains two distinct elements $E_1,E_2$ that induce a partial spread which extends to a Desarguesian spread.
Since $|\E| > q^{n}+2$, using Lemma \ref{disjoint}, we can find two elements $E_3, E_4 \in \E$ such that $\langle E_1, E_2, E_3, E_4\rangle$ spans $\PG(4n-1,q)$.

The partial spread induced by $E_1$ in the space $\langle E_2,E_3,E_4\rangle$ can be extended to a Desarguesian spread $\D_1$.
Analogously, the partial spread induced by $E_2$ in the space $\langle E_1,E_3,E_4\rangle$ can be extended to a Desarguesian spread $\D_2$. Since $E_3$ and $E_4$ are elements of the spreads $\D_1$ and $\D_2$, the Desarguesian spreads $\D_1$ and $\D_2$ intersect the $(2n-1)$-space $\langle E_3, E_4\rangle$ each in a Desarguesian spread, say $\S_1$ and $\S_2$ respectively.

Take an element $E \in \E \backslash\{E_1,E_2\}$ and consider the $(3n-1)$-space $\langle E_1,E_2,E\rangle$. From Lemma \ref{disjoint} it follows that any element of $\E$ is either contained in or disjoint from $\langle E_1,E_2,E\rangle$. By considering the elements of $\E\setminus\{E_1,E_2\}$, we find a set $\T$ of $(3n-1)$-spaces containing $\langle E_1,E_2\rangle$, such that each space of $\T$ intersects $\E$ in a pseudo-arc. Every two spaces in $\T$ meet exactly in $\langle E_1,E_2\rangle$ and $\E$ is the union of the  pseudo-arcs $\{T\cap \E|T\in \T\}$.
The set $\T$ intersects $\langle E_3, E_4\rangle$ in a partial $(n-1)$-spread $\P$.

Let $P$ be an element of $\P$, then $\langle P,E_1,E_2\rangle$ is a $(3n-1)$-space containing at least one element $E$ of $\E\backslash\{E_1,E_2\}$. The projection $E'$ of $E$ from $E_1$ onto $\langle E_2,E_3,E_4\rangle$ is contained in $\D_1$. We obtain that $P=\langle E_2,E'\rangle \cap \langle E_3,E_4\rangle$, and since the elements $E_2,E',E_3,E_4$ are contained in $\D_1$, this implies that $P$ is contained in $\D_1$. Moreover, since $P\subset \langle E_3,E_4\rangle$, the element $P$ is contained in $\S_1$. Similarly, we obtain that $P$ is contained in $\S_2$ and we conclude that every element of $\P$ must be contained in both $\S_1$ and $\S_2$.

Suppose that $k$ is the largest divisor of $n$ with $k\neq n$. The pseudo-cap $\E$ has size $|\E|> (q^{n}-\epsilon)(q^k+1)+2$ and every $(3n-1)$-space of $\T$ contains at most $q^n-\epsilon$ elements different from $E_1, E_2$, where $\epsilon=1$ for $q$ odd and $\epsilon=0$ for $q$ even. By the pigeonhole principle, it follows that $|\P|\geq q^k+2$.  Hence, the Desarguesian spreads $\S_1$ and $\S_2$ have at least $q^k+2$ elements in common, where $k$ is the largest divisor of $n$ with $k\neq n$. As $q>2$, by Lemma \ref{regpluseen}, we find that $\S_1=\S_2$.

By Theorem \ref{UniqueDesSpread}, consider the unique Desarguesian spread $\D$ of $\PG(4n-1,q)$ containing all elements of $\D_1$ and two distinct elements of $\D_2 \backslash \D_1$. It is clear that, since $\S_1=\S_2$, the spread $\D$ contains all elements of $\D_2$.

Every element of $\E$, not in $\D_1 \cup \D_2$, arises as the intersection $\langle E_1, X \rangle \cap \langle E_2, Y \rangle$ for some $X \in \D_1 \subset \D$ and $Y \in \D_2 \subset \D$, hence, since a Desarguesian spread is normal, every element of $\E$ belongs to $\D$. It follows that $\E$ is elementary.
\end{proof}

We obtain the following corollary which improves \cite[Theorem 5.1.12]{TGQ}.
\begin{corollary}
A weak egg in $\PG(4n-1,q)$ which is good at two distinct elements is elementary.
\end{corollary}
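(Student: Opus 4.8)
The plan is to derive this corollary directly from the two main results already established, namely Theorem \ref{good=Desarguesian}(i) and Theorem \ref{main}. First I note that a weak egg in $\PG(4n-1,q)$ is precisely a pseudo-cap with parameter $m=2n$, and hence has exactly $q^{2n}+1$ elements. The hypothesis that $\E$ is good at two distinct elements $E_1,E_2$ is then converted, via Theorem \ref{good=Desarguesian}(i), into the statement that each of $E_1$ and $E_2$ induces a partial spread which extends to a Desarguesian spread. This is exactly the structural input demanded by Theorem \ref{main}, so the only thing left to verify before invoking it is the size hypothesis.

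Checking the size bound is the single computational point of the proof. Let $k$ be the largest divisor of $n$ with $k\neq n$, so that $k\le n/2$ and therefore $n+k\le \tfrac{3}{2}n<2n$. For $q$ even I would confirm $q^{2n}+1>q^{n+k}+q^n+2$ and for $q$ odd $q^{2n}+1>q^{n+k}+q^n-q^k+1$; in either case the right-hand side is dominated by $q^{n+k}\le q^{3n/2}$, which is far smaller than $q^{2n}$ for $q>2$ and $n\ge 2$, so both inequalities hold with room to spare. With the size condition in hand, Theorem \ref{main} applies and immediately yields that $\E$ is elementary.

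I expect no serious obstacle in the main line of argument; the whole content is the translation of ``good'' into ``Desarguesian-extendable'' followed by the numerical check. The only points requiring a word of care are the degenerate parameters that Theorem \ref{main} sets aside. When $n=1$ there is no proper divisor $k$ of $n$ and field reduction over $\F_{q^1}$ is the identity, so every pseudo-cap of $\PG(3,q)$ is elementary and the statement is vacuous. The genuine boundary case is $q=2$, which falls outside the range $q>2$ of Theorem \ref{main} (inherited from Lemma \ref{regpluseen}); there one must either appeal separately to the even-characteristic results quoted from \cite{TGQ} or read the corollary as stated for $q>2$. This boundary, rather than any step of the reduction, is where I would concentrate the remaining attention.
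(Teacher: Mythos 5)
Your reduction is the same one the paper uses (goodness $\Rightarrow$ Desarguesian-extendable via Theorem \ref{good=Desarguesian}(i), then the machinery of Theorem \ref{main}), and your size check is correct, but there is a genuine gap: the corollary is stated for \emph{all} $q$, and by invoking Theorem \ref{main} as a black box you inherit its hypothesis $q>2$, so your argument does not prove the statement when $q=2$. Neither of your proposed escape routes closes this. The cited result from \cite{TGQ} requires at least \emph{four} good elements not on a common pseudo-oval (and is for eggs, not weak eggs) --- it is exactly the theorem this corollary is meant to improve, so appealing to it is circular in spirit and insufficient in hypothesis. And ``reading the corollary as stated for $q>2$'' is not a proof of the corollary; it is a weakening of it.

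The missing idea, which is how the paper handles this, is to \emph{re-enter} the proof of Theorem \ref{main} rather than quote its conclusion, and exploit that goodness is a strictly stronger hypothesis than Desarguesian-extendability plus a size bound. If $\E$ is good at $E_1$ (and $E_2$), every $(3n-1)$-space of the family $\T$ through $\langle E_1,E_2\rangle$ meets $\E$ in exactly $q^n+1$ elements, so $|\T|=\frac{q^{2n}-1}{q^n-1}=q^n+1$ and hence the partial spread $\P=\{T\cap\langle E_3,E_4\rangle : T\in\T\}$ has size $q^n+1$, i.e.\ it is a \emph{full} spread of $\langle E_3,E_4\rangle$. Since every element of $\P$ lies in both $\S_1$ and $\S_2$, and these spreads also have exactly $q^n+1$ elements, one gets $\S_1=\P=\S_2$ by counting alone. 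This bypasses the pigeonhole step and Lemma \ref{regpluseen} entirely --- which is the only place $q>2$ enters the proof of Theorem \ref{main} --- so the restriction $q>2$ can be dropped and the corollary holds for $q=2$ as well. Your write-up correctly flags $q=2$ as the locus of difficulty, but flagging it is not the same as resolving it.
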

\begin{proof}
A weak egg is a pseudo-cap of size $q^{2n}+1$ in $\PG(4n-1,q)$. By Theorem \ref{good=Desarguesian}, if the weak egg is good at two elements, these elements induce a partial spread which extends to a Desarguesian spread. We can repeat the proof of Theorem \ref{main}. Now the partial spread $\P$ has size $q^n+1$, so the conclusion $\S_1=\S_2$ follows immediately. We do not require Lemma \ref{regpluseen}, hence the restriction $q>2$ can be dropped.
\end{proof}

\subsection{A corollary in terms of translation generalised quadrangles}\label{3.2 TGQ}
A {\em generalised quadrangle} of order $(s,t)$, $s,t>1$, is an incidence structure of points and lines satisfying the following axioms:
\begin{itemize}
\item every line has exactly $s+1$ points,
\item through every point, there are exactly $t+1$ lines,
\item if $P$ is a point, not on the line $L$, then there is exactly one line through $P$ which meets $L$ non-trivially.
\end{itemize}

From every {\em egg} $\E$ in $\Sigma_{\infty}=\PG(2n+m-1,q)$ we can construct a generalised quadrangle $(\P,\L)$ as follows. Embed $\Sigma_{\infty}$ as a hyperplane at infinity of $\PG(2n+m,q)$.
\begin{itemize}
\item[$\P$:]
\begin{itemize}
\item[$(i)$] {\em affine} points of $\PG(2n+m,q)$, i.e. the points not lying in $\Sigma_\infty$,
\item[$(ii)$] the $(n+m)$-spaces meeting $\Sigma_\infty$ in $T_E$ for some $E\in \E$,
\item[$(iii)$] the symbol $(\infty)$.
\end{itemize}
\item[$\L$:]
\begin{itemize}
\item[$(a)$] the $n$-spaces meeting $\Sigma_\infty$ in an element of $\E$,
\item[$(b)$] the elements of $\E$.
\end{itemize}
\end{itemize}
Incidence is defined as follows.
\begin{itemize}
\item A point of type $(i)$ is incident with the lines of type $(a)$ through it.
\item A point of type $(ii)$ is incident with the lines of type $(a)$ it contains and the line of type $(b)$ it contains.
\item The point $(\infty)$ is incident with all lines of type $(b)$.
\end{itemize}
The obtained generalised quadrangle is denoted as $T(\E)$ and is called a {\em translation generalised quadrangle} (TGQ) with base point $(\infty)$. In \cite[Theorem 8.7.1]{FGQ} it is proven that every TGQ of order $(q^n,q^{m})$, where $\F_q$ is a subfield of its kernel, is isomorphic to a $T(\E)$ for some egg $\E$ of $\PG(2n+m-1,q)$.

When $n=m=1$, then $\O$ is an oval of $\PG(2,q)$ and the construction above gives the well-known construction of $T_2(\O)$. When $n=1$ and $m=2$, then $\O$ is an ovoid of $\PG(3,q)$ and the construction above is the construction of Tits of $T_3(\O)$ (see \cite{TGQ}).

\begin{lemma}  \label{TGQ1} Let $T=T(\E)$ be a TGQ of order $(q^n,q^{2n})$ with base point $(\infty)$.
Let $m_1,m_2, m_3$ be three distinct lines through $(\infty)$, and let $E_1$, $E_{2}$, $E_{3}$ denote the elements of $\E$ corresponding to $m_1,m_2,m_3$ respectively. Then there is a subquadrangle of order $q^n$ through $m_1,m_2,m_3$ if and only if the $(3n-1)$-dimensional space $\langle E_1,E_{2},E_{3}\rangle$ contains exactly $q^n+1$ elements of $\E$.
\end{lemma}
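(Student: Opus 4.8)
The plan is to identify a subquadrangle of order $q^n$ through $m_1,m_2,m_3$ with a Tits quadrangle $T(\O)$ built from the pseudo-oval $\O=\pi\cap\E$ sitting in the $(3n-1)$-space $\pi=\langle E_1,E_2,E_3\rangle$, and to read off the two directions of the equivalence from this identification.

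For the implication that $|\pi\cap\E|=q^n+1$ yields a subquadrangle, I first note that $\O:=\pi\cap\E$ is then a pseudo-oval of $\pi\cong\PG(3n-1,q)$, and that for each $E\in\O$ the space $\sigma_E:=T_E\cap\pi$ is its tangent $(2n-1)$-space at $E$; this is a short count in the quotient $\Sigma_{\infty}/E$, using that $T_E$, being skew to the remaining elements of $\E$, projects from $E$ to a $(2n-1)$-space disjoint from $\E/E$. I then fix a $(3n)$-space $U$ of $\PG(4n,q)$ with $U\cap\Sigma_{\infty}=\pi$, and define the subquadrangle $S'$ to consist of the affine points of $U$, the symbol $(\infty)$, and all $(3n)$-spaces $\langle\beta,T_E\rangle$ where $E\in\O$ and $\beta$ is a $(2n)$-space of $U$ with $\beta\cap\pi=\sigma_E$; its lines are the $n$-spaces of $U$ meeting $\pi$ in an element of $\O$, together with the elements of $\O$. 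The crucial point is the bijection $\beta\leftrightarrow\langle\beta,T_E\rangle$ (with inverse $X\mapsto X\cap U$) between the type (ii) points of the classical $T(\O)$ inside $U$ and points of type (ii) of $T(\E)$: a dimension check shows that $\langle\beta,T_E\rangle$ meets $\Sigma_{\infty}$ exactly in $T_E$ and meets $U$ exactly in $\beta$, so it is a genuine point of $T(\E)$ and all incidences of $T(\E)$ restrict as required. Thus $S'\cong T(\O)$ is a subquadrangle of order $(q^n,q^n)$, and it contains $m_1,m_2,m_3$ because $E_1,E_2,E_3\in\O$.

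Conversely, suppose $S'$ is a subquadrangle of order $q^n$ through $m_1,m_2,m_3$. As every line through $(\infty)$ is of type (b), the $q^n+1$ lines of $S'$ through $(\infty)$ form a set $\O\subseteq\E$ containing $E_1,E_2,E_3$, and the whole problem is to prove $\langle\O\rangle=\pi$. First I show that the direction $\ell\cap\Sigma_{\infty}$ of any type (a) line $\ell$ of $S'$ lies in $\O$: the unique point of $\ell$ collinear with $(\infty)$ is the type (ii) point $X$ of $\ell$, with $X\cap\Sigma_{\infty}=T_F$ for some $F\in\O$, and $\ell\subseteq X$ forces $\ell\cap\Sigma_{\infty}\subseteq T_F$, hence $=F$. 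Counting lines then shows that through any affine point $P_0$ of $S'$ the $q^n+1$ lines of $S'$ are exactly the spaces $\langle P_0,F\rangle$, $F\in\O$. Viewing the affine points as vectors based at $P_0$ and writing $\widehat{F}$, $\widehat{V}$ for the vector spaces underlying $F$ and $V:=\langle\O\rangle$, the set $W$ of affine points of $S'$ then contains each $\widehat{F}$ and satisfies $b+c\in W$ whenever $b\in W$, $c\in\widehat{F}$ (all affine points of the $S'$-line $\langle P_0+b,F\rangle$ lie in $S'$). Together with connectedness of the collinearity graph this gives $W=\widehat{V}$; comparing $|W|=q^{\dim V+1}$ with the $q^{3n}$ affine points of a quadrangle of order $(q^n,q^n)$ yields $\dim V=3n-1$, so $V=\pi$ and $\O\subseteq\pi$.

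Finally I must exclude further elements of $\E$ in $\pi$. If $\pi\cap\E$ had $q^n+2$ elements it would be a pseudo-hyperoval, so for $E\in\pi\cap\E$ the projection $(\pi\cap\E)/E$ would be a full $(n-1)$-spread of $\pi/E\cong\PG(2n-1,q)$; but $T_E/E$ is disjoint from $\E/E$, so the $(2n-1)$-spaces $T_E/E$ and $\pi/E$ of $\Sigma_{\infty}/E\cong\PG(3n-1,q)$ would meet in an at least $(n-1)$-dimensional space disjoint from that spread, which is impossible. Hence $\pi\cap\E=\O$ has exactly $q^n+1$ elements. The main obstacle is the dimension computation in the converse — showing that the affine points of the subquadrangle fill an affine $(3n)$-space; this closure argument is the essential content and amounts to the fact that $(\infty)$ being a translation point forces $S'$ to be a translation subquadrangle, while excluding the even-characteristic pseudo-hyperoval is the secondary technical step.
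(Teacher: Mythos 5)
Your proposal is correct and follows essentially the same route as the paper: in the forward direction you realise the subquadrangle as $T(\O)$ for the pseudo-oval $\O=\pi\cap\E$ (the paper disposes of this in one line, whereas you spell out the identification of the type $(ii)$ points via $\beta\mapsto\langle\beta,T_E\rangle$), and in the converse you identify the $q^n+1$ type $(b)$ lines of $S'$, show that the lines of $S'$ through an affine point $P_0$ are exactly the spaces $\langle P_0,F\rangle$, $F\in\O$, and deduce from the count of $q^{3n}$ affine points that $\langle\O\rangle$ is $(3n-1)$-dimensional --- this is precisely the paper's argument, phrased in terms of coset-closure of the affine point set instead of coverings by lines through points of the $E_j$; you also make explicit the fact, merely asserted in the paper, that a $(3n-1)$-space cannot contain more than $q^n+1$ elements of an egg.

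One small repair: your appeal to ``connectedness of the collinearity graph'' to conclude $W=\widehat V$ is not justified as stated. The collinearity graph of $S'$ is indeed connected, but paths may pass through points of type $(ii)$ (a common neighbour $X=\langle P_0,T_F\rangle$ of two affine points need not force membership in $P_0+\widehat V$), and it is not clear a priori that the affine points under affine collinearity form a connected graph. Fortunately you do not need equality: your coset-closure argument cleanly gives the containment $\widehat V\subseteq W$, whence $q^{\dim V+1}=|\widehat V|\leq |W|=q^{3n}$ forces $\dim V\leq 3n-1$; since $V$ contains $\langle E_1,E_2,E_3\rangle$, which is $(3n-1)$-dimensional by the pseudo-cap property, this yields $V=\pi$ and $\O\subseteq\pi$ (and $W=\widehat V$ then follows a posteriori). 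So the gap is harmless and closable using only pieces already present in your proof.
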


\begin{proof} Suppose that the $(3n-1)$-space $\Sigma=\langle E_1,E_{2},E_{3}\rangle$ contains a set $\O$ of exactly $q^n+1$ elements of $\E$, then it is clear that $T(\E)$ defines the incidence structure $T(\O)$ in a $3n$-space through $\Sigma$. The structure $T(\O)$ is a generalised quadrangle of order $q^n$, forming a subquadrangle of $T(\E)$ and containing the lines $m_1,m_2,m_3$.

On the other hand, suppose that there is a subquadrangle $T'$ of order $q^n$ containing $m_1,m_2,m_3$, where the lines $m_1,m_2,m_3$ are incident with $(\infty)$. This implies that the point $(\infty)$ is in $T'$, and since $(\infty)$ lies only on lines of type ($b$) (i.e. the lines corresponding to elements of $\E$), we know that $T'$ contains exactly $q^n+1$ lines of type $(b)$, among which the lines $m_1, m_2$ and $m_3$. Let $\{E_1,\ldots,E_{q^n+1}\}$ be the egg elements corresponding to these lines. This means that there are $(q^n+1)q^{2n}$ lines in $T'$ of type $(a)$, containing in total $(q^{n}+1)q^{2n}(q^{n})/(q^n+1)=q^{3n}$ points of type $(i)$ (i.e. affine points).

Each $(n-1)$-space $E_j$ is contained in $q^{2n}$ $n$-spaces corresponding to a line of type $(a)$ of $T'$ and every affine point is contained in exactly one $n$-space containing $E_j$.
Let $P_j$ be a point of the space $E_j$, then we see that the $q^{3n}$ affine points of $T'$ lie on $q^{2n}$ lines through $P_j$. As this holds for every $j\in \{1,\ldots,q^n+1\}$, it is clear that the $q^{3n}$ affine points of $T'$ are contained in a $3n$-space. This in turn implies that the elements $\{E_1,\ldots,E_{q^n+1}\}$ are contained in a $(3n-1)$-space, namely $\langle E_1,E_2,E_{3}\rangle$. Hence, this space contains at least $q^n+1$ elements of $\E$. Since $\E$ is an egg, it is not possible that a $(3n-1)$-space contains more than $q^n+1$ elements of $\E$, which concludes the proof.
\end{proof}

\begin{lemma}Let $T=T(\E)$ be a TGQ of order $(q^n,q^{2n})$ with base point $(\infty)$. Let $\ell$ be a line through $(\infty)$ and $E_\ell$ the element of $\E$ corresponding to $\ell$. The egg $\E$ is good at $E_\ell$ if and only if for every two distinct lines $m_1,m_2$ through $(\infty)$, where $m_1,m_2\neq \ell$, there is a subquadrangle of order $q^n$ through $m_1,m_2, \ell$.
\end{lemma}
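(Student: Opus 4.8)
The plan is to reduce both implications to the preceding lemma (Lemma~\ref{TGQ1}), which already translates the existence of a subquadrangle of order $q^n$ through three distinct lines $m_1,m_2,m_3$ through $(\infty)$ into the condition that the $(3n-1)$-space $\langle E_1,E_2,E_3\rangle$ spanned by the corresponding egg elements meets $\E$ in exactly $q^n+1$ elements. The essential observation that makes this reduction work is a dimension count: since $\E$ is a pseudo-cap, any three of its elements span a $(3n-1)$-space, so a $(3n-1)$-space containing $E_\ell$ together with two further elements $E_1,E_2$ must coincide with $\langle E_\ell,E_1,E_2\rangle$.

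For the forward implication I would assume $\E$ is good at $E_\ell$. Given two distinct lines $m_1,m_2$ through $(\infty)$, both different from $\ell$, with corresponding egg elements $E_1,E_2$, the span $\langle E_\ell,E_1,E_2\rangle$ is a $(3n-1)$-space containing $E_\ell$ and at least two other elements of $\E$. By the definition of goodness, this space meets $\E$ in exactly $q^n+1$ elements, and Lemma~\ref{TGQ1}, applied to the triple $\ell,m_1,m_2$, then yields a subquadrangle of order $q^n$ through these three lines.

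For the converse I would start from an arbitrary $(3n-1)$-space $\Sigma$ containing $E_\ell$ together with at least two other elements of $\E$, say $E_1,E_2$, and show that $\Sigma$ meets $\E$ in exactly $q^n+1$ elements. The elements $E_1,E_2$ correspond to distinct lines $m_1,m_2$ through $(\infty)$, both different from $\ell$, so by hypothesis there is a subquadrangle of order $q^n$ through $\ell,m_1,m_2$; Lemma~\ref{TGQ1} then shows that $\langle E_\ell,E_1,E_2\rangle$ meets $\E$ in exactly $q^n+1$ elements. The dimension identification noted above gives $\Sigma=\langle E_\ell,E_1,E_2\rangle$, so $\Sigma$ contains exactly $q^n+1$ elements of $\E$. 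As $\Sigma$ was an arbitrary $(3n-1)$-space through $E_\ell$ and two other egg elements, this is precisely the condition that $\E$ be good at $E_\ell$.

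There is no serious obstacle here: the lemma is essentially a dictionary entry translating the combinatorial notion of goodness at $E_\ell$ into the incidence language of subquadrangles, and all the real content is already carried by Lemma~\ref{TGQ1}. The only point requiring minimal care is the identification $\Sigma=\langle E_\ell,E_1,E_2\rangle$ in the converse, which is exactly what guarantees that the hypothesis, ranging over all pairs of lines through $(\infty)$ distinct from $\ell$, captures \emph{every} $(3n-1)$-space relevant to the definition of goodness.
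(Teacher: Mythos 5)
Your proposal is correct and is essentially the paper's own proof: the paper simply states that the lemma ``follows immediately from Lemma~\ref{TGQ1} and the definition of being good at an element,'' and your argument spells out exactly that reduction, including the (correct) observation that any $(3n-1)$-space through $E_\ell$ and two other egg elements must equal their span.
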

\begin{proof} This follows immediately from Lemma \ref{TGQ1} and the definition of a being good at an element.
\end{proof}

We are now ready to state the promised characterisation of the translation generalised quadrangle $T_3(\O)$.
\begin{theorem} Let $T$ be a TGQ of order $(q^n,q^{2n})$ with base point $(\infty)$.
Suppose that $T$ contains two distinct lines $\ell_i$, $i=1,2$ such that for every two distinct lines $m_1,m_2$ through $(\infty)$, where $m_1,m_2\neq \ell_i$, $i=1,2$ there is a subquadrangle through $m_1,m_2,\ell_i$, $i=1,2$, then $T$ is isomorphic to $T_3(\O)$, where $\O$ is an ovoid of $\PG(3,q^n)$.
\end{theorem}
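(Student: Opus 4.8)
The plan is to rewrite the hypothesis as a condition on the egg attached to $T$, feed it into the results already established, and then recognise the elementary egg that appears as the field reduction of an ovoid.

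Since $T$ is a TGQ of order $(q^n,q^{2n})$, by \cite[Theorem 8.7.1]{FGQ} we may write $T=T(\E)$ for an egg $\E$ of $\PG(4n-1,q)$, and the lines $\ell_1,\ell_2$ through $(\infty)$ correspond to two distinct elements $E_{\ell_1},E_{\ell_2}\in\E$. First I would apply the lemma immediately preceding this theorem: the hypothesis says exactly that, for each $i$ and for every two further lines $m_1,m_2$ through $(\infty)$, there is a subquadrangle of order $q^n$ through $m_1,m_2,\ell_i$, which by that lemma is equivalent to $\E$ being good at $E_{\ell_i}$. Hence $\E$ is an egg that is good at the two distinct elements $E_{\ell_1}$ and $E_{\ell_2}$. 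Next I would invoke the corollary to Theorem \ref{main}: a weak egg in $\PG(4n-1,q)$ that is good at two distinct elements is elementary. As an egg is in particular a weak egg, $\E$ is elementary, so $\E$ arises by applying field reduction to a set $\O$ of $q^{2n}+1$ points of $\PG(3,q^n)$; since $\E$ is a pseudo-cap, $\O$ is a cap, and a cap of size $q^{2n}+1$ in $\PG(3,q^n)$ (with $q^n>2$) is an ovoid. Thus $\O$ is an ovoid of $\PG(3,q^n)$.

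It remains to identify $T(\E)$ with $T_3(\O)$, and this is the step I expect to be the real obstacle, because the two quadrangles are built in different ambient spaces ($\PG(4n,q)$ versus $\PG(4,q^n)$) and do \emph{not} correspond under a naive field reduction: reducing $\PG(4,q^n)$ yields $\PG(5n-1,q)$, not $\PG(4n,q)$, so the affine points of $T_3(\O)$ are points of $\PG(4,q^n)$ while those of $T(\E)$ are points of $\PG(4n,q)$, matching only in number. The clean way to bridge them is through the kernel of the TGQ: the translation group of $T(\E)$ acts regularly on the affine space $\PG(4n,q)\setminus\Sigma_\infty$, and the fact that $\E$ is elementary over $\F_{q^n}$ means precisely that $\F_{q^n}$ lies in the kernel, endowing this affine space with the structure of $\AG(4,q^n)$. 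Under this $\F_{q^n}$-structure the egg $\E$ in the hyperplane at infinity is exactly the ovoid $\O$ of $\PG(3,q^n)$, the tangent spaces $T_E$ become the tangent planes of $\O$, and the Tits construction $T(\E)$ over $\F_q$ is reinterpreted as the Tits construction $T(\O)=T_3(\O)$ over $\F_{q^n}$. I would therefore either cite the standard equivalence between elementary eggs and the quadrangles $T_3(\O)$, or, for a self-contained argument, spell out the kernel reinterpretation above to produce the required isomorphism, which concludes the proof.
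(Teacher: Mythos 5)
Your proposal is correct and follows exactly the route the paper intends: the paper states this theorem without a separate proof because it is the immediate combination of the preceding lemma (the subquadrangle condition at $\ell_i$ is equivalent to $\E$ being good at $E_{\ell_i}$), the corollary to Theorem \ref{main} (a weak egg in $\PG(4n-1,q)$ good at two distinct elements is elementary, with no restriction on $q$), and the standard identification of $T(\E)$ with $T_3(\O)$ when $\E$ is the field reduction of an ovoid $\O$ of $\PG(3,q^n)$. Your extra care with that last step --- using the kernel to endow $\AG(4n,q)$ with the structure of $\AG(4,q^n)$, under which the egg elements become the points of $\O$ and the type $(a)$ lines become affine lines meeting $\O$ at infinity --- is precisely the right way to make the identification rigorous, and fills in a detail the paper leaves implicit.
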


\section{A geometric proof of a Theorem of Lavrauw}\label{4ThmLavrauw}

In this section we obtain a second characterisation of good weak eggs. We need the following lemma stating that every good element of a weak egg has a tangent space.

\begin{lemma}\label{tangentspace}
If a weak egg $\E$ in $\PG(2n+m-1,q)$ is good at an element $E$, then there exists a unique $(n+m-1)$-space $T$, such that $T \cap \E = \{E\}$.
\end{lemma}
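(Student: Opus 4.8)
The plan is to pass to the quotient geometry $\Sigma := \PG(2n+m-1,q)/E \cong \PG(n+m-1,q)$ and to identify the desired tangent space with the span of the ``missing'' spread elements. Since $\E$ is good at $E$, Theorem \ref{good=Desarguesian}$(i)$ tells us that the induced partial spread $\S := \E/E$ of size $q^m$ in $\Sigma$ extends to a Desarguesian $(n-1)$-spread $\D$, and by Lemma \ref{m=2n} we have $m=kn$ for some $k\geq 2$. A short count gives $|\D\setminus\S| = \frac{q^m-1}{q^n-1}$. Identifying the elements of $\D$ with the points of $\PG(k,q^n)$ through field reduction, it then suffices to show that the point set corresponding to $\D\setminus\S$ is a hyperplane of $\PG(k,q^n)$: pulling such a hyperplane back through $E$ will produce the $(n+m-1)$-space $T$.

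The key step is this hyperplane claim, and here goodness does the work uniformly in the parity of $q$. Goodness at $E$ translates, in $\Sigma$, into the statement that every $(2n-1)$-space spanned by two elements of $\S$ carries exactly $q^n$ elements of $\S$. Since the span of two elements of the normal spread $\D$ carries exactly $q^n+1$ spread elements, such a $(2n-1)$-space meets $\D\setminus\S$ in precisely one element; and a $(2n-1)$-space carrying at most one element of $\S$ trivially meets $\D\setminus\S$ as well. Hence every line of $\PG(k,q^n)$ meets the point set of $\D\setminus\S$, so this set is a line-blocking set of the minimal size $\frac{q^m-1}{q^n-1}=\frac{q^{kn}-1}{q^n-1}$; by \cite{BoseBurton} it is therefore a $(k-1)$-dimensional subspace. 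This mirrors the Bose--Burton argument already used in the odd case of Theorem \ref{good=Desarguesian}$(ii)$, the difference being that goodness supplies the blocking property directly, independently of whether $q$ is odd or even. I expect this to be the main obstacle, since it is the one step that is genuinely combinatorial rather than formal.

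Let $W$ be the resulting $(m-1)$-space spanned by $\D\setminus\S$ in $\Sigma$. As $\D$ is normal, $W$ is partitioned by the elements of $\D\setminus\S$, and comparing cardinalities shows that $W$ is exactly the set of points of $\Sigma$ not covered by $\bigcup\S$; in particular $W$ is disjoint from every element of $\S$. Let $T$ be the $(n+m-1)$-space containing $E$ that projects onto $W$. For any $F\in\E\setminus\{E\}$ the element $F$ is disjoint from $E$ and projects to $F/E\in\S$, which is disjoint from $W$; hence $F\cap T=\emptyset$, and so $T\cap\E=\{E\}$, as required.

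Finally, for uniqueness I would use the same complementation. Any $(n+m-1)$-space $T'$ with $T'\cap\E=\{E\}$ contains $E$ and projects to an $(m-1)$-space $W'$ that is disjoint from every element of $\S$; thus $W'\subseteq \Sigma\setminus\bigcup\S = W$. Since $\dim W'=\dim W$, this forces $W'=W$ and therefore $T'=T$, which establishes that the tangent space is unique.
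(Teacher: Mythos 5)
Your proof is correct and takes essentially the same route as the paper's: pass to the quotient by $E$, extend $\E/E$ to a Desarguesian spread $\D$ via Theorem~\ref{good=Desarguesian}$(i)$, use goodness to show that $\D\setminus\S$ is a line-blocking set of minimal size in $\PG(m/n,q^n)$ so that \cite{BoseBurton} forces it to be a hyperplane spanning an $(m-1)$-space, and then take $T=\langle E,\D\setminus\S\rangle$. The only difference is completeness: you spell out the blocking-set step, which the paper compresses into ``following the proof of Theorem~\ref{good=Desarguesian}$(ii)$'', and you give the uniqueness argument explicitly, which the paper leaves implicit.
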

\begin{proof}
Consider a $(n+m-1)$-space $\Sigma$ disjoint from $E$.
If $\E$ is good at $E$, the element $E$ induces a partial spread $\S=\E / E$ which extends to a Desarguesian spread $\D$ of $\Sigma$. By following the proof of Theorem \ref{good=Desarguesian}, part $(ii)$, for both $q$ odd and $q$ even, the elements of $\D \backslash \S$ span a $(m-1)$-space. It is clear that the $(n+m-1)$-space $T =\langle E, \D\backslash\S\rangle$ satisfies $T \cap \E =E$.
\end{proof}

In \cite{LavrauwPenttila} the authors proved that every egg of $\PG(7,2)$ arises from an elliptic quadric $Q^-(3,4)$ by field reduction. Hence, in the following characterisation, when $\E$ is an egg in $\PG(4n-1,q)$, the condition $q^n>4$ is essentially not a restriction.

\begin{theorem}
Suppose $n>1$, $q^n>4$, consider $\E$ a weak egg in $\PG(4n-1,q)$. Then $\E$ is elementary if and only if the following three properties hold:
\begin{itemize}
\item $\E$ is good at an element $E$,
\item there exists a $(3n-1)$-space, disjoint from $E$, with at least $5$ elements $E_1$, $E_2$, $E_3$, $E_4$, $E_5$ of $\E$,
\item all pseudo-ovals of $\E$ containing $\{E,E_1\}$, $\{E,E_2\}$ or $\{E,E_3\}$ are elementary.
\end{itemize}
\end{theorem}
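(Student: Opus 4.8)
The forward implication is routine. If $\E$ is elementary, it lies in a Desarguesian spread $\D^*$ of $\PG(4n-1,q)$. Since $\D^*$ is normal, every $(3n-1)$-space spanned by three of its elements is partitioned by spread elements, so it meets $\E$ in the field reduction of a planar section of the associated (pseudo-)ovoid of $\PG(3,q^n)$, i.e.\ in exactly $q^n+1$ elements; hence $\E$ is good at every element, giving the first property. For the second, the good element $E$ corresponds to a point $P$ of the ovoid, and any plane of $\PG(3,q^n)$ avoiding $P$ and meeting the ovoid in $q^n+1\ge 5$ points ($q^n>4$) field-reduces to a $(3n-1)$-space disjoint from $E$ carrying at least five elements of $\E$. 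The third holds because every planar section of the ovoid through $P$ is an oval, whose field reduction is an elementary pseudo-oval.

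For the converse, fix the good element $E$ and take $\Pi$ to be the $(3n-1)$-space of the second hypothesis, which is disjoint from $E$. By Theorem \ref{good=Desarguesian}$(i)$ the partial spread $\E/E$, read in $\Pi$ by projection from $E$, extends to a Desarguesian spread $\D$ of $\Pi$; by Lemma \ref{tangentspace} its deficiency is the $(2n-1)$-space $T_E/E$. Identifying $\D$ with $\PG(2,q^n)$, the assignment $F\mapsto \langle E,F\rangle\cap\Pi$ identifies $\E\setminus\{E\}$ with $\AG(2,q^n)$, the pseudo-ovals of $\E$ through $E$ with the affine lines, and $E_1,\dots,E_5$ (which lie in $\Pi$, hence in $\D$) with an arc of at least five points, no three collinear. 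Under this dictionary the pseudo-ovals on $\{E,E_i\}$ are exactly those over the pencil of affine lines through $p_i$; in particular every element of $\E$ lies on an elementary pseudo-oval through $E$ and $E_1$. (For $q$ odd all these pseudo-ovals are automatically pseudo-conics by Corollary \ref{odd good= pseudo-conic}, so the force of the third hypothesis is in even characteristic.)

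The plan is to assemble a single Desarguesian spread $\D^*$ of $\PG(4n-1,q)$ containing $\E$. Starting from $\D$ on $\Pi$, the element $E$ together with one element $F_0$ of the reference pseudo-oval $\O_{12}=\langle E,E_1,E_2\rangle\cap\E$ determines, by Lemma \ref{UniqueDesSpread}, a unique Desarguesian spread $\D^*$ with $\D^*|_\Pi=\D$ and $E,F_0\in\D^*$. It then suffices to show that each elementary pseudo-oval $\O$ through $E$ lies in $\D^*$, since these cover $\E$. Because $\D^*|_\Pi=\D$, the spread $\D^*$ already carries the full Desarguesian line-spread $\D|_\tau$ on $\tau:=\langle\O\rangle\cap\Pi$; a second application of Lemma \ref{UniqueDesSpread}, inside the $(3n-1)$-space $\langle\O\rangle$ and using $E$ together with an already-placed element, reduces $\O\subseteq\D^*$ to the single statement that the Desarguesian spread $\D_\O$ carried by $\O$ induces on $\tau$ the same line-spread as $\D$. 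The key observation making this tractable is that the tangent space of $\O$ at the good element $E$ meets $\tau$ in $T_E\cap\tau$, which projects from $E$ onto $\tau\cap(T_E/E)$, the point at infinity of $\D$ on $\tau$. Thus $\D_\O$ and $\D$ share, besides the arc points of $\O$ on $\tau$, this further collinear element; when $\tau=\langle E_i,E_j\rangle$ joins two arc points this already yields three collinear common elements, so by regularity both spreads contain the whole regulus through them.

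The main obstacle is to upgrade this partial agreement on $\tau$ to full agreement, i.e.\ to force the parameter $t$ of Lemma \ref{regpluseen} to equal $n$ rather than a proper divisor. This is precisely where both the size of the arc and the three good directions enter: running the tangent construction simultaneously on the sides $\langle E_i,E_j\rangle$ of the triangle $E_1E_2E_3$, and tying in the remaining arc points $E_4,E_5$ through the elementary pseudo-ovals $\O_{1j},\O_{2j},\O_{3j}$, one first settles the finitely many ``secant'' pseudo-ovals $\O_{ij}$ and then bootstraps the generic ones, each newly placed element of $\D^*$ supplying additional common elements on the relevant lines until more than $q^{k}+1$ are shared (with $k$ the largest proper divisor of $n$), whence $\D_\O|_\tau=\D|_\tau$ by Lemma \ref{regpluseen}. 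Once all elementary pseudo-ovals through $E$ lie in $\D^*$ we conclude $\E\subseteq\D^*$, so $\E$ is elementary. In the residual case $q=2$ (forcing $n\ge 3$), where Lemma \ref{regpluseen} is unavailable, the same bootstrap instead exhibits a second good element, and elementarity follows from the corollary to Theorem \ref{main} on weak eggs that are good at two elements.
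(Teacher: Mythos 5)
Your outline matches the paper's strategy (goodness gives a Desarguesian spread $\D_0$ on $\Pi$; Lemma \ref{UniqueDesSpread} builds one candidate spread; one then shows every elementary pseudo-oval through $E$ lies in it), but the mechanism you propose for the key step does not work, and the mechanism that does work is absent. The engine of the paper's proof is a projection argument you never use: for any pseudo-oval $\O$ of $\E$ through $E$, the $q^n$ elements of $\O\setminus\{E\}$ project from $E$ onto $q^n$ distinct elements of $\D_0$ lying in $\tau=\langle\O\rangle\cap\Pi$. Hence, whenever the base $(2n-1)$-space is spanned by two elements of $\O$ that also lie in the ambient spread (e.g.\ $\langle E_1,E_2\rangle$ for the secant oval $\langle E,E_1,E_2\rangle\cap\E$), these projections are elements of \emph{both} spreads, which therefore share at least $q^n$ elements and coincide by the uniqueness of the deficiency-one completion (Beutelspacher) --- not by Lemma \ref{regpluseen}. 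Your ``main obstacle'' (two arc elements plus a tangent element, regulus closure, accumulating past $q^k+1$ to force $t=n$) is a phantom: agreement is automatic and complete, Lemma \ref{regpluseen} is never needed (so no exceptional case at $q=2$ arises), and your count of common elements is far too small precisely because it omits the projections. Moreover, even on its own terms your central step is not carried out: the bootstrap ``until more than $q^{k}+1$ are shared'' is asserted, not proved, and the $q=2$ fallback (``the bootstrap exhibits a second good element'') is pure assertion --- proving $\E$ good at a second element is essentially the theorem itself.

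The deeper gap concerns the generic pseudo-ovals, those meeting $\Pi$ in a single element of $\E$ --- and these are the ones that cover $\E$. For such an $\O$, comparing spreads ``on $\tau$'' is ill-posed: $\tau$ is not spanned by two elements of $\S_\O$, so in general $\S_\O$ has no elements inside $\tau$ beyond the one arc element; only the \emph{projection} of $\S_\O$ from $E$ agrees with the spread induced by $\D_0$ on $\tau$, and Lemma \ref{UniqueDesSpread} requires genuine containment of the elements of a spread of the base space, not agreement of projections (for the same reason, your claim that the tangent element $T_E\cap\tau$ belongs to $\S_\O$ is unjustified outside the secant case). The paper circumvents this by changing the base space, and this is where all five hypothesised elements are used: it first constructs an element $E_6\subset\langle E,F\rangle$ with $F=\langle E_1,E_5\rangle\cap\langle E_2,E_4\rangle$ a diagonal point, the tangent space of Lemma \ref{tangentspace} guaranteeing $F$ can be chosen off $T$; this $E_6$ lies on two secant ovals $\O_1,\O_2$ at once, pins down the candidate spread $\D$, and places $\O_1,\O_2$ inside $\D$. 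Then, for a generic oval $\O$ through $\{E,E_3\}$, it finds elements $E_7\in\O\cap\O_1$ and $E_8\in\O\cap\O_2$ already in $\D$, runs the projection argument with base $\langle E_7,E_8\rangle$ --- a space spanned by elements of both $\S_\O$ and $\D$ --- and only then invokes Lemma \ref{UniqueDesSpread} with $E$ and $E_3$ as the two outside elements; a final round through ovals on $\{E,E_1\}$ or $\{E,E_2\}$, again needing two placed intersection elements each, catches the remaining egg elements. None of these steps (the diagonal-point construction, the counting that avoids $T$, the change of base, the final covering round) appears in your proposal, so what you have is an outline of the correct strategy whose substantive content is missing or replaced by an argument that fails.
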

\begin{proof}
Clearly, if an egg is elementary, the statement is valid.

For the converse, consider the $(3n-1)$-space $\Pi$ containing 5 elements $E_1,E_2,E_3,E_4,E_5$ of $\E$, but not the element $E$. As $\E$ is good at $E$, the element $E$ induces a partial spread which extends to a Desarguesian $(n-1)$-spread $\D_0$ in $\Pi$, which contains $E_i$, $i=1,\ldots,5$.

By Lemma \ref{tangentspace}, there exists a unique $(3n-1)$-space $T$, such that $T \cap \E=\{E\}$. When $\E$ is an egg, this space corresponds to the tangent space $T_E$.

Consider the two $(n-1)$-spaces $F=\langle E_1, E_5 \rangle \cap \langle E_2, E_4 \rangle$ and $F'=\langle E_1, E_5\rangle \cap \langle E_3, E_4 \rangle$. Both $F$ and $F'$ are contained in $\D_0$, but at most one of them can be contained in the $(2n-1)$-space $\Pi\cap T$. Suppose $F$ is not contained in $T$ (note that this choice has no further impact as $E_2$ and $E_3$ play the same role). This implies that the $(2n-1)$-space $\langle E, F \rangle$ contains an element $E_6 \in \E \backslash \{E\}$. By Theorem \ref{UniqueDesSpread}, there exists a unique Desarguesian spread $\D$ containing $E$, $E_6$ and all elements of $\D_0$. We will prove that $\E$ is contained in $\D$.

The $(3n-1)$-space $\langle E, E_1,E_5\rangle$ intersect $\E$ in a pseudo-oval $\O_1$, and the $(3n-1)$-space $\langle E, E_2,E_4\rangle$ intersect $\E$ in a pseudo-oval $\O_2$. Clearly, $\O_1$ and $\O_2$ both contain $E_6$.

By assumption, $\O_1$ and $\O_2$ are elementary pseudo-ovals. The Desarguesian $(n-1)$-spread in $\langle E,E_1,E_5 \rangle$ containing $\O_1$ contains $E$, $E_6$ and the $q^n+1$ elements of ${\D_0}  \cap \langle E_1, E_5\rangle$. It follows that this Desarguesian spread is contained in $\D$, hence $\O_1$ is contained in $\D$. Analogously, the pseudo-oval $\O_2$ is also contained in $\D$.

There are $q^n-2$ pseudo-ovals $\O$ of $\E$, containing $\{E,E_3\}$, but not $E_6$, such that the $(3n-1)$-space $\langle \O\rangle$ does not contain the $(n-1)$-space $T \cap \langle \O_1\rangle$, nor the $(n-1)$-space $T \cap \langle \O_2\rangle$. Take such an oval $\O$, then there is an element $E_7$ of $\E\setminus \{E\}$ contained in  $\langle \O\rangle\cap \langle\O_1\rangle$, hence, $E_7\in \O\cap \O_1$. Likewise, there is an element $E_8$ of $\E\setminus \{E\}$ contained in $\O\cap \O_2$.

By assumption, $\O$ is elementary; let $\S_{\O}$ be the Desarguesian $(n-1)$-spread containing $\O$. As $E_7$ and $E_8$ are contained in $\D$, the Desarguesian spread $\D$ intersects $\langle E_7,E_8\rangle$ in a Desarguesian spread. Let $P$ be an element of $\D\cap \langle E_7,E_8\rangle$, not contained in $T$, then $\langle E,P\rangle$ meets $\Pi$ in an element of $\D$, and hence, $\langle E,P\rangle$ contains an element $P'$ of $\E\setminus E$. As $\langle E,P\rangle$ is contained in $\langle \O\rangle$, $P'$ is an element of $\O$, and hence also of $\S_{\O}$. Since $P',E,E_7,E_8$ are contained in $\S_\O$, the element $P=\langle E,P'\rangle\cap \langle E_7,E_8\rangle$ is an element of $\S_{\O}$. This implies that $\D\cap \langle E_7,E_8\rangle$ and $\S_{\O}$ have at least $q^n$ elements in common, which implies in turn that they have all their elements in common.
We conclude that $\S_{\O}$ contains $E$, $E_3$ and the $q^n+1$ elements of $\D \cap \langle E_7, E_8\rangle$, hence $\S_\O$ and thus all elements of $\O$ are contained in $\D$.

Now, consider an element $E_9 \in \E$, not contained in $\O_1$, $\O_2$ or any of the previously considered $q^n-2$ pseudo-ovals $\O$. Look at the pseudo-oval $\O'=\langle E,E_1,E_9\rangle\cap \E$ and the pseudo-oval $\O''=\langle E,E_2,E_9\rangle \cap \E$. At least one of them does not contain $E_3$. Suppose $\O'$ does not contain $E_3$ (the proof goes analogously if $\O''$ does not contain $E_3$).
For at most one of the $q^n-2$ pseudo-ovals $\O$ containing $\{E,E_3\}$ we have $\langle \O\rangle \cap \langle \O''\rangle \in T$. Hence, since $q^n-2\geq 3$, we can find two distinct elementary pseudo-ovals containing $\{E,E_3\}$ that are contained in $\D$ and have an element $E_{10}$ and $E_{11}$ respectively in common with $\O'$.

Let $\S_{\O'}$ be the Desarguesian $(n-1)$-spread containing $\O'$. As $E_{10}$ and $E_{11}$ are elements of $\D$ the same argument as before shows that all but one element of the Desarguesian spread $\D\cap \langle E_{10},E_{11}\rangle$ can be written as the intersection of $\langle E,P''\rangle$ with $\langle E_{10},E_{11}\rangle$ for some $P''$ in $\O'$. It follows that $\S_{\O'}$ contains $E,E_{1}$ and the $q^n+1$ elements of $\D\cap \langle E_{10},E_{11}\rangle$, hence, that $\S_{\O'}$ is contained in $\D$. In particular, the element $E_9$ is contained in $\D$, which implies that $\E\subset \D$ and so that $\E$ is elementary and more specifically, a field reduced ovoid.
\end{proof}

When $\E$ is good at $E$ and $q$ is odd, by Corollary \ref{odd good= pseudo-conic} all pseudo-ovals of $\E$ containing $E$ are pseudo-conics; we use this to obtain the following corollary. The same statement, where $\E$ is an egg, was proven in \cite[Theorem 3.2]{LavrauwEggs} using coordinates. For $\E$ an egg, this was also shown in \cite[Theorem 5.2.3]{TGQ} where a different proof was obtained independently, relying on a technical theorem concerning the $\F_{q^n}$-extension of the egg elements. We have now obtained a direct geometric proof.

\begin{corollary}
A weak egg $\E$ of $\PG(4n-1,q)$, $q$ odd, $n>1$, is classical if and only if it is good at an element $E$ and there exists  a $(3n-1)$-space, not containing $E$, with at least $5$ elements of $\E$.
\end{corollary}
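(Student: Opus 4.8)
The plan is to deduce the corollary directly from the preceding theorem by verifying that its hypotheses are satisfied when $q$ is odd. The theorem characterises elementary weak eggs of $\PG(4n-1,q)$ via three conditions, so I would show that, for $q$ odd, the first two hypotheses of the corollary automatically supply all three, after which the equivalence of ``elementary'' and ``classical'' in odd characteristic finishes the argument.

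First I would handle the forward implication, which is immediate: a classical weak egg is by definition elementary (it is field-reduced from an elliptic quadric), and an elementary weak egg trivially is good at each of its elements (by Theorem \ref{good=Desarguesian}$(i)$, since every element induces a partial spread extending to the ambient Desarguesian spread) and contains $(3n-1)$-spaces meeting it in pseudo-ovals, hence in $q^n+1\geq 5$ elements. For the converse, I would assume $\E$ is good at $E$ and that some $(3n-1)$-space not containing $E$ carries at least $5$ elements of $\E$; these are precisely the first two bulleted hypotheses of the theorem. The only remaining thing to check is the third hypothesis, namely that the relevant pseudo-ovals through $E$ are elementary. This is exactly where the odd-characteristic assumption does the work.

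The key step is to invoke Corollary \ref{odd good= pseudo-conic}: since $\E$ is good at $E$ and $q$ is odd, \emph{every} pseudo-oval of $\E$ containing $E$ is a pseudo-conic, and a pseudo-conic is by definition elementary. In particular all pseudo-ovals containing $\{E,E_1\}$, $\{E,E_2\}$ or $\{E,E_3\}$ are elementary, so the third hypothesis of the theorem holds for free. Applying the theorem, $\E$ is elementary, and more precisely it is a field-reduced ovoid of $\PG(3,q^n)$. It then remains to upgrade ``elementary'' to ``classical'': by the Barlotti--Panella theorem every ovoid of $\PG(3,q^n)$ with $q$ odd is an elliptic quadric $Q^-(3,q^n)$, so the field-reduced ovoid arises from an elliptic quadric, which is exactly the definition of classical.

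I do not anticipate a genuine obstacle here, since the corollary is a specialisation of the theorem; the real content has already been absorbed into Corollary \ref{odd good= pseudo-conic} and into the Barlotti--Panella classification. The only point requiring a little care is making explicit that the hypothesis $q^n>4$ of the theorem is met: this is harmless because $q$ is odd, forcing $q\geq 3$, and with $n>1$ we get $q^n\geq 9>4$, so the theorem applies without extra assumptions. Thus the proof is essentially a bookkeeping exercise verifying that the two stated conditions plus oddness reconstruct the three conditions of the theorem.
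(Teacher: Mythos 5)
Your proposal is correct and takes essentially the same route as the paper: the paper also obtains the corollary by feeding Corollary \ref{odd good= pseudo-conic} (goodness at $E$ plus $q$ odd forces every pseudo-oval of $\E$ through $E$ to be a pseudo-conic, hence elementary) into the third hypothesis of the preceding theorem, concluding that $\E$ is a field-reduced ovoid, and then invoking Barlotti--Panella to upgrade elementary to classical. One small slip worth fixing: in your forward direction, the statement that an element inducing a partial spread extendable to a Desarguesian spread is a good element is part $(ii)$ of Theorem \ref{good=Desarguesian}, not part $(i)$ (part $(i)$ is the converse implication).
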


{\bf Acknowledgment.} The authors wish to thank Simeon Ball for suggesting the study of eggs in terms of the induced (partial) spreads.


\begin{thebibliography}{99}
\bibitem{Barlotti} A. Barlotti. Un'estensione del teorema di Segre-Kustaanheimo. {\em Boll. Un. Mat. Ital. (3)} {\bf 10} (1955), 498--506.
\bibitem{BarlottiCofman} A. Barlotti and J. Cofman. Finite Sperner spaces constructed from projective and
affine spaces. {\em Abh. Math. Sem. Univ. Hamburg} {\bf 40} (1974), 231--241.
\bibitem{Beutelspacher} A. Beutelspacher. Blocking sets and partial spreads in finite projective spaces. {\em Geom. Dedicata} {\bf 9 (4)} (1980), 425--449.
\bibitem{BoseBurton} R.C. Bose and R.C. Burton. A characterization of flat spaces in a finite geometry and the uniqueness of the Hamming and the MacDonald codes. {\em J. Combin. Theory }{\bf 1} (1966), 96--104.
\bibitem{Br} R.H. Bruck and R.C. Bose. The construction of translation planes from projective spaces.  {\em J. Algebra}  {\bf 1}  (1964), 85--102.

\bibitem{Casse} L.R.A. Casse, J.A. Thas and P.R. Wild. $(q^n+1)$-sets of ${\rm PG}(3n-1,q)$, generalized quadrangles and Laguerre planes. {\em Simon Stevin} {\bf 59 (1)} (1985), 21--42.
\bibitem{LavrauwEggs} M. Lavrauw. Characterizations and properties of good eggs in $\PG (4n-1, q)$, $q$ odd. {\em Discrete Math.} {\bf 301} (2005), 106--116.
\bibitem{LavrauwPenttila} M. Lavrauw and T. Penttila. On eggs and translation generalised quadrangles. {\em
J. Combin. Theory Ser. A} {\bf 96} (2001), 303--315.
\bibitem{FQ11} M. Lavrauw and G. Van de Voorde. Field reduction in finite geometry. {\em Topics in finite fields}. Contemp. Math., 632, Amer. Math. Soc., Providence, RI, 2010.
\bibitem{Panella} G. Panella. Caratterizzazione delle quadriche di uno spazio (tridimensionale) lineare sopra un corpo finito. {\em Boll. Un. Mat. Ital. (3)} {\bf 10} (1955), 507--513.
\bibitem{FGQ} S.E. Payne and J.A. Thas. {\em Finite generalized quadrangles.} Research Notes
in Mathematics, 110. Pitman (Advanced Publishing Program), Boston, MA,
1984. vi+312 pp. ISBN 0-273-08655-3.
\bibitem{Penttila} T. Penttila and G. Van de Voorde. Extending pseudo-arcs in odd characteristic. {\em Finite Fields Appl.} {\bf 22} (2013), 101--113.

\bibitem{PsO} S. Rottey and G. Van de Voorde. Pseudo-ovals in even characteristic and ovoidal Laguerre planes. {\em J. Combin. Theory Ser. A} {\bf 129} (2015), 105--121.
\bibitem{ThasGQ1} J.A. Thas. Generalized quadrangles of order $(s,s^2)$, I. {\em J. Combin. Theory Ser. A} {\bf 67} (1994), 140--160.
\bibitem{Thas PHO} J.A. Thas. The m-dimensional projective space $S_m(M_n(GF(q)))$ over the
total matrix algebra $M_n(GF(q))$ of the $n\times n$-matrices with elements in
the Galois field $GF(q)$. {\em Rend. Mat. (6)} {\bf 4} (1971), 459--532.

\bibitem{TGQ}J.A. Thas, K. Thas and H. Van Maldeghem. {\em Translation generalized quadrangles}. Series in Pure Mathematics {\bf 26}. World Scientific Publishing Co. Pte. Ltd., Hackensack, NJ, 2006.
\end{thebibliography}
\end{document}